\theoremstyle{definition}
\newtheorem{lem}{Lemma}[section]
\newtheorem{proposition}{Proposition}[section]
\newtheorem{col}{Corollary}[section]
\newtheorem{theo}{Theorem}[section]
\newtheorem*{note}{Note}
\newtheorem*{pro}{Proof}
\def\checkmark{\tikz\fill[scale=0.4](0,.35) -- (.25,0) -- (1,.7) -- (.25,.15) -- cycle;} 
\author{Joel Foisy, Luis Ángel Topete Galván, Evan Knowles, Uriel Alejandro Nolasco,
\and Yuanyuan Shen, and Lucy Wickham}
\title{Intrinsically projectively linked graphs}
\begin{document}

\maketitle

\begin{abstract}
A graph is \textit{intrinsically projectively linked (IPL)} if its every embedding in projective space contains a nonsplit link. Some minor-minimal IPL graphs have been found previously \cite{2008REU}. We determine that no minor-minimal IPL graphs on 16 edges exists and identify new minor-minimal IPL graphs by applying $\Delta-Y$ exchanges to $K_{7}-2e$. We prove that for a nonouter-projective-planar graph $G$, $G+\bar{K}_{2}$ is IPL and describe the necessary and sufficient conditions on a projective planar graph $G$ such that $G+\bar{K}_{2}$ is IPL. Lastly, we deduce conditions for $f(G + \bar{K_{2}})$ to have no nonsplit link, where $G$ is projective planar, $\bar{K_{2}} = \{w_{0},w_{1}\}$, and $f(G + \bar{K_{2}})$ is the embedding onto $\mathbb{R}P^{3}$ with $f(G)$ in $z=0$, $w_{0}$ above $z=0$, and $w_{1}$ below $z=0$ such that every edge connecting ${w_{0},w_{1}}$ to $G$ avoids the boundary of the 3-ball, whose antipodal points are identified to obtain projective space.
\end{abstract}
\renewcommand{\thefootnote}{\fnsymbol{footnote}} 
    
\footnotetext{\emph{MSC 2020} 57M15, 05C10, 57K10}     
\renewcommand{\thefootnote}{\arabic{footnote}} 
\section{Introduction}

Let $\mathbb{R}P^2$ denote real projective plane, which we take to be the unit disk in $\mathbb{R}^2$ with antipodal points on its boundary identified and let $\mathbb{R}P^3$ denote real projective space, which we take to be the closed unit ball in $\mathbb{R}^3$ with antipodal points on its boundary identified. In $\mathbb{R}P^3$, a two component link $L_{1} \cup L_{2}$ is \textit{split} if there exists a subset $A$ homeomorphic to a 3-ball, such that $L_{1} \subset A$, $L_{2} \subset A^{c}$. A graph $G$ is \textit {intrinsically linked in $\mathbb{R}P^{3}$} if in every embedding of $G$ in $\mathbb{R}P^{3}$ there is a nonsplit two component link. In 1993, Robertson, Seymour and Thomas characterized the set of all minor-minimal intrinsically linked graphs in $\mathbb{R}^{3}$: the Petersen family graphs \cite{robertson}. This family consists of the seven graphs obtained from $K_6$ through a sequence of $\Delta-Y $ and $Y- \Delta $ exchanges. We wish to obtain a similar result in $\mathbb{R}P^3$ and find the complete set of minor-minimal intrinsically projectively linked (IPL) graphs. All minor-minimal intrinsically linked in ${\mathbb R}P^3$  signed graphs have been classified \cite{Duong}.\\

It is known that exactly one graph in the Petersen family,  $K_ {4,4}-e$, is intrinsically projectively linked (IPL) \cite{2008REU}. The graph $K_6 \therefore K_6$, obtained by gluing two copies of $K_6$ along three vertices $v_1$, $v_2$, $v_3$ and then removing the triangle composed of the three edges between $v_1$, $v_2$, and $v_3$, is also IPL. There are two graphs obtained by deleting any two edges from $K_{7}$, either the two deleted edges are adjacent or nonadjacent, we denote either resulting graph as $K_{7}-2e$. In either case, we show the resulting graph is minor-minimal IPL. All other known minor-minimal IPL graphs have at least 9 vertices and 17 edges. We explore candidates for minor-minimal IPL graphs with 8 vertices and 16 edges by applying $\Delta-Y$ exchanges, that preserve the property of being IPL \cite{2008REU}, to $K_7-2e$ and by adding an edge or applying vertex splittings to the Peterson family graphs. We show there are no new IPL graphs on 16 edges and classifying IPL graphs on 8 vertices remains open.\\


 A cycle embedded in $\mathbb{R}P^3$ is \emph{0-homologous} if and only if it bounds a disk. This is also called a \textit{null cycle} \cite{Glover}. A cycle embedded in $\mathbb{R}P^3$ is \emph{1-homologous} if and only if it does not bound a disk. This is also called an \textit{essential cycle} \cite{Glover}. A projective planar graph $G$ is \textit{separating projective planar} if for every embedding of $G$ into $\mathbb{R}P^2$, there is a 0-homologous cycle $C$ in $G$ such that one vertex of $G\setminus C$ is in one connected component of ${\mathbb{R}P}^2\setminus C$ and another vertex of $G\setminus C$ is in the other connected component of $\mathbb{R}P^2\setminus C$. If there exists an embedding of graph $G$ that does not have this property, the graph is \textit{nonseparating projective planar}. A separating projective planar graph is nonouter-projective-planar. We prove that for any nonouter-projective-planar graph $G$, $G + \overline{K}_2$ is IPL by examining the 32 minor-minimal nonouter-projective-planar graphs and identifying subgraphs of known IPL graphs \cite{Archdeacon}. \\

There is no known $n$ for which $K_n$ has, in every embedding in projective space, a nonsplit link with at least one component 0-homologous. It is not even known if such an $n$ exists. Lastly, we deduce conditions for $f(G + \bar{K_{2}})$ to have no nonsplit link, where $G$ is projective planar, $\bar{K_{2}} = \{w_{0},w_{1}\}$, and $f(G + \bar{K_{2}})$ is the embedding onto $\mathbb{R}P^{3}$ with $f(G)$ in $z=0$, $w_{0}$ above $z=0$, and $w_{1}$ below $z=0$ such that every edge connecting ${w_{0},w_{1}}$ to $G$ avoids the boundary of the ball whose quotient gives projective space. We show $f(G + \bar{K_{2}})$ has no nonsplit link of two 0-homologous cycles if and only if $f(G)$ is a nonseparating projective planar embedding, $f(G + \bar{K_{2}})$ has no nonsplit link of a 0-homologous cycle and a 1-homologous cycle if and only if $f(G)$ has no separating 1-homologous cycle, and $f(G + \bar{K_{2}})$ has no nonsplit link of two 1-homologous cycles if and only if every 1-homologous edge share a common vertex in $f(G)$ or $f(G)$ has exactly three 1-homologous edges intersecting pairwise. \\

\section{Definitions and notation}

All of our graphs will be embedded piecewise linearly. Thus, for every graph, we may assume every cycle intersects the boundary at a finite number of points.\\

An embedding of graph in $\mathbb{R}P^3$ is an \emph{affine embedding} if the graph embedding does not intersect the boundary of the ball we use to define projective space.\\

A graph that can be obtained from a graph $G$ by a series of edge deletions, vertex deletions and edge contractions is called a \emph{minor} of $G$. The graph $G$ is \emph{minor-minimal} if, whenever $G$ has property $P$ and $H$ is a minor of $G$, then $H$ does not have property $P$. A property $P$ is \emph{minor closed} if, whenever a graph $G$ has property $P$ and $H$ is a minor of $G$, then $H$ also has property $P$. If $P$ is a minor closed property and the graph $G$ does not have property $P$, then $G$ is a \emph{forbidden graph} for $P$. Robertson and Seymour's Minor Theorem states if $P$ is a minor-closed graph property, then the minor-minimal forbidden graphs for $P$ form a finite set \cite{robertson}. \\

An \emph{outer-projective-planar graph} is one that can be embedded in the projective plane with all vertices in the same face - this is a minor closed property. Other minor closed properties include having a nonseparating projective planar embedding \cite{Dehkordi}, having a planar embedding and having a linkless embedding in space \cite{2008REU}.\\

Given two graphs $G$, $H$, their \emph{sum} $G+H$ is formed by starting with the disjoint union of $G$ and $H$, and adding an edge between every vertex in $G$ and every vertex in $H$. A \emph{complete graph} is a graph with an edge between all possible pairs of vertices in the graph. We represent a complete graph as $K_m$, where $m$ is the order of the graph. A \emph{complete bipartite graph} is a graph whose vertices can be divided into two disjoint sets, where no two vertices in the same set are adjacent and every vertex of the first set is adjacent to every vertex of the second set. We represent a complete bipartite graph as $K_{n,m}$, where the cardinalities of the two sets are $n$ and $m$. A \emph{complete $k$-partite graph} is a graph whose vertices can be divided into $k$ disjoint sets, where no two vertices in the same set are adjacent and every vertex of a given set is adjacent to every vertex in every other set. We represent a complete $k$-partite graph as $K_{n_1, \dots, n_k}$, where the cardinalities of the sets are $n_1, \dots, n_k$. \\

\section{Intrinsically projectively linked graphs}

\subsection{$\Delta$-Y, Y-$\Delta$ exchanges}

In the first part of this section, we prove that the $Y-\Delta$ exchange preserves nonseparating projective planarity and that the $\Delta-Y$ exchange preserves separating projective planarity. The following lemma follows easily from \cite{Glover}.

\begin{lem}
Nonprojective-planarity is not always preserved by Y-$\Delta$ exchanges. 
\end{lem}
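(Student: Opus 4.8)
The plan is to prove the lemma by exhibiting a single explicit counterexample, since the statement is purely existential: I need one nonprojective-planar graph $G$ carrying a degree-$3$ vertex whose $Y$-$\Delta$ exchange yields a projective-planar graph. I would take $G = K_{3,3} \sqcup K_{3,3}$, the disjoint union of two copies of $K_{3,3}$, and perform the exchange at any vertex $v$ of the first copy. Writing that copy with parts $\{v,p,q\}$ and $\{a,b,c\}$, the exchange deletes $v$ together with the edges $va, vb, vc$ and inserts the triangle $ab, bc, ca$; the resulting component is $K_5 - e$ (the missing edge being $pq$), which is planar, while the second copy is untouched. Thus the exchange produces $G' = (K_5 - e) \sqcup K_{3,3}$.

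The first half of the verification --- that $G$ is nonprojective-planar --- is where the reference to \cite{Glover} enters, and it is the part I would write most carefully. I would argue homologically in $\mathbb{R}P^2$: a connected nonplanar graph cannot be embedded so that all of its cycles bound disks, for if every cycle were $0$-homologous the embedding would lift through the double cover $S^2 \to \mathbb{R}P^2$ and exhibit the graph as planar; hence each copy of $K_{3,3}$ must contain a $1$-homologous (essential) cycle in any embedding. But in $\mathbb{R}P^2$ the mod-$2$ self-intersection of the nonzero class of $H_1(\mathbb{R}P^2; \mathbb{Z}/2) = \mathbb{Z}/2$ equals $1$, so any two essential simple closed curves meet. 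The two essential cycles coming from the two disjoint copies of $K_{3,3}$ would themselves be disjoint, a contradiction; therefore $G$ admits no embedding in $\mathbb{R}P^2$.

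For the second half I would simply display an embedding of $G'$: embed $K_{3,3}$ in $\mathbb{R}P^2$ using the crosscap in the standard (cellular) way, which has an open disk among its complementary faces, and place the planar graph $K_5 - e$ inside that disk. This shows $G'$ is projective-planar, completing the contrast with $G$ and proving the lemma. The main obstacle is entirely in the nonprojective-planarity claim: one must rule out \emph{every} embedding of $G$, not merely one, and the cleanest route is the intersection-form argument above rather than an appeal to the list of forbidden minors. By comparison, the computation that the $Y$-$\Delta$ exchange turns a $K_{3,3}$ component into exactly $K_5 - e$ is routine, as is the explicit embedding of $G'$.
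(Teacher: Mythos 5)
Your proof is correct, and it follows the same overall strategy as the paper's: exhibit a disconnected nonprojective-planar graph and use a Y-$\Delta$ exchange to planarize one component, so that the result (a projective-planar piece together with a disjoint planar piece) embeds in $\mathbb{R}P^2$. The paper instantiates this with the two-component graph $C_{11}$ from Glover, Huneke, and Wang's list of 103 irreducible graphs for the projective plane, citing that list for nonprojective-planarity and displaying embeddings in figures; you instantiate it with $K_{3,3} \sqcup K_{3,3}$ and compute explicitly that the exchange turns one component into the planar graph $K_5 - e$. The genuine difference is in how nonprojective-planarity of the starting graph is justified: the paper simply appeals to the known obstruction list, whereas you give a self-contained argument --- every cycle of a connected graph embedded in $\mathbb{R}P^2$ being null-homologous would let the embedding lift through the double cover $S^2 \to \mathbb{R}P^2$, forcing planarity, so each nonplanar component carries an essential cycle, and two disjoint essential simple closed curves in $\mathbb{R}P^2$ are impossible since the nonzero class of $H_1(\mathbb{R}P^2;\mathbb{Z}/2)$ has self-intersection $1$. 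Your route costs a little topology but buys independence from the cited list and from figures, and it makes transparent exactly why \emph{every} embedding of the starting graph fails, which is the only nontrivial point; the paper's route is shorter given that the reference is accepted.
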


\begin{proof}
The below graph $C_{11}$ is not projective planar \cite{Glover}. It has two components, $A$ and $B$.

\begin{figure}[H]
    \centering
    \includegraphics[scale=0.4]{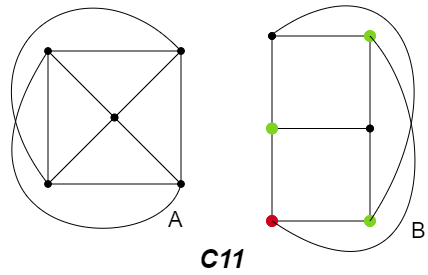}

    \label{fig:c11}
\end{figure}
Perform a Y-$\Delta$ exchange on the highlighted vertices of $B$, where the red vertex shares an edge with each of the three green vertices and is taken to be the center of the Y. Let $B'$ denote the resulting graph. Subgraph $A$ is projective planar and subgraph $B^{'}$ is planar. Therefore, the resulting graph $C_{11}^{'}$ is projective planar.
    \begin{figure}[ht]
\centering
\begin{minipage}[b]{0.4\linewidth}
			  \centering
    \includegraphics[scale=0.4]{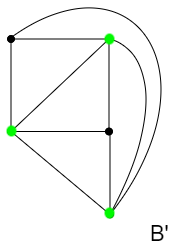}
    \caption{Subgraph $B^{'}$ from a Y-$\Delta$ on $B$}
\end{minipage}
\quad
\begin{minipage}[b]{0.4\linewidth}
			  \centering
    \includegraphics[scale=0.4]{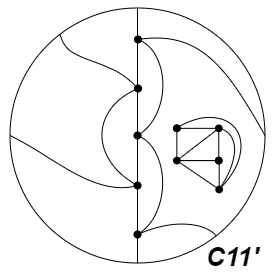}
    \caption{Embedding of $C_{11}^{'}$ in $\mathbb{R}P^2$}
\end{minipage}
\end{figure}

\end{proof}

\begin{col}
Projective planarity is not always preserved by $\Delta$-Y exchanges.
\end{col}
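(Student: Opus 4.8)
The plan is to exploit the fact that the $\Delta$-Y and Y-$\Delta$ exchanges are mutually inverse operations, so that the corollary falls out of the preceding lemma simply by reading its construction backwards. Recall that in the lemma we produced a projective-planar graph $C_{11}'$ from the non-projective-planar graph $C_{11}$ by performing a Y-$\Delta$ exchange on a designated degree-three (red) vertex together with its three (green) neighbors. The crucial observation is that a Y-$\Delta$ exchange deletes the central vertex and inserts a triangle on the three green vertices; applying a $\Delta$-Y exchange to exactly that triangle reinserts the central vertex and undoes the move.

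First I would record the general principle that a $\Delta$-Y exchange performed on a triangle $T$ and the Y-$\Delta$ exchange performed on the claw it creates are inverse to one another; this is immediate from the definitions of the two moves and requires no computation. Next I would take $C_{11}'$, which the lemma established is projective planar, as the starting graph, and perform the $\Delta$-Y exchange on the triangle of $C_{11}'$ formed by the three green vertices from the lemma's construction. By the inverse relationship just noted, the output of this exchange is precisely $C_{11}$, which is not projective planar by \cite{Glover}.

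This exhibits a projective-planar graph ($C_{11}'$) admitting a $\Delta$-Y exchange whose result ($C_{11}$) is not projective planar, which is exactly the assertion of the corollary. The only step that warrants an explicit line is the verification that the three green vertices genuinely span a triangle in $C_{11}'$, so that the $\Delta$-Y move is well defined on it; but this is automatic, since a Y-$\Delta$ exchange always produces such a triangle by construction. I do not anticipate any real obstacle here: the entire mathematical content is supplied by the lemma, and the corollary is obtained by a single dualization of that argument.
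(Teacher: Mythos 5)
Your proposal is correct and matches the paper's intended reasoning exactly: the corollary is stated in the paper as an immediate consequence of the lemma, with the implicit argument being precisely yours, namely that the $\Delta$-Y exchange inverts the Y-$\Delta$ exchange, so applying it to the green triangle of the projective-planar graph $C_{11}'$ recovers the non-projective-planar graph $C_{11}$. The only hypothesis worth keeping in mind (implicit in both your argument and the paper's) is that the three green vertices are pairwise nonadjacent in $C_{11}$, so that the two exchanges are genuinely mutually inverse on this pair of graphs.
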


\begin{theo}
Nonseparating projective planarity is preserved by Y-$\Delta$ exchanges.
\end{theo}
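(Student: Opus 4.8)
The plan is to take the embedding guaranteed by the hypothesis and modify it locally. Since $G$ is nonseparating projective planar, fix an embedding $\Gamma$ of $G$ in $\mathbb{R}P^2$ in which no $0$-homologous cycle separates two vertices, and let $v$ be the degree-$3$ vertex of the $Y$ with neighbours $a,b,c$. Choose a closed disk $N\subset\mathbb{R}P^2$ whose interior contains $v$, which meets $G$ in exactly the three edges $va,vb,vc$ and has $a,b,c$ on $\partial N$, so that $\mathrm{int}(N)$ contains no vertex other than $v$. I perform the standard $Y$-$\Delta$ exchange \emph{inside} $N$: delete $v$ and its three edges and route the new edges $ab,bc,ca$ parallel to the old $Y$, so that $ab$ runs alongside the path $a\,v\,b$, and likewise $bc$ alongside $b\,v\,c$ and $ca$ alongside $c\,v\,a$. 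Because $N$ is a disk, this produces a legitimate embedding $\Gamma'$ of the exchanged graph $G'$ in $\mathbb{R}P^2$; moreover the triangle cycle $abc$ bounds a disk contained in $N$ that contains $v$ but no vertex of $G'$, so it is $0$-homologous and separates no vertices.

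I would then argue $\Gamma'$ is a nonseparating embedding by contradiction: suppose some $0$-homologous cycle $C'$ in $G'$ separates vertices $x,y\in G'\setminus C'$ into the two components of $\mathbb{R}P^2\setminus C'$. Split into cases by how many of $ab,bc,ca$ lie on $C'$. If $C'$ uses none of them, then $C'$ is a cycle of $G$ drawn identically in $\Gamma$, hence still $0$-homologous and still separating $x,y$, contradicting that $\Gamma$ is nonseparating. If $C'$ is the full triangle $abc$, then by the previous paragraph it separates no vertices at all, contradicting that it separates $x,y$. The two substantive cases are when $C'$ uses exactly one triangle edge, say $ab$, or exactly two, say $ab$ and $bc$ (any two triangle edges meet, so this is general up to relabelling).

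In the one-edge case I replace $ab$ on $C'$ by the path $a\,v\,b$ to get a cycle $C$ of $G$; in the two-edge case the vertex $b$ has both its $C'$-edges on the triangle, so I replace the subpath $a\,b\,c$ by $a\,v\,c$, again getting a cycle $C$ of $G$ (through $v$, avoiding $b$). In either case $C$ and $C'$ differ only inside the disk $N$ and there cobound a $2$-chain supported in $N$; since $N$ is contractible this makes $C$ homologous to $C'$, so $C$ is also $0$-homologous. Outside $N$ the two curves coincide, and the region where they differ lies in $N$ and contains no vertex of $G'$ in its interior; as $x,y\notin\{a,b,c,v\}$ they lie strictly outside this region, so the partition of the exterior vertices is unchanged and $C$ still separates $x,y$. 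Since $x,y\in G\setminus C$, this contradicts that $\Gamma$ is nonseparating. Exhausting the cases shows $\Gamma'$ witnesses that $G'$ is nonseparating projective planar.

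I expect the main obstacle to be making the ``differs only inside $N$'' step fully rigorous in both substantive cases: one must verify carefully that pushing the part of $C'$ lying in $N$ across the parallel strip (respectively the $b$-excursion) neither changes the homology class in $\mathbb{R}P^2$ nor alters which exterior vertices are separated. The facts that carry this through are that $N$ is a disk, so the differing curve-fragments cobound a disk and are therefore homotopic, and that $\mathrm{int}(N)$ contains no vertex of $G'$, so the only region whose side could change carries no exterior vertex of interest.
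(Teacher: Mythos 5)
Your proposal is correct and follows essentially the same route as the paper's proof: perform the $Y$-$\Delta$ exchange inside a small disk around the center of the $Y$ in the given nonseparating embedding, then argue by contradiction with a case analysis on how many of the three new triangle edges a separating $0$-homologous cycle $C'$ uses ($0$, $1$, $2$, or $3$), replacing the triangle edges by the corresponding paths through $v$ to recover a separating cycle in the original embedding. The only slip is your assertion that $x,y\notin\{a,b,c,v\}$ --- in the one-edge case $c$ need not lie on $C'$, so $c$ could be one of $x,y$ --- but this is harmless, since $c$ still lies outside the thin strip where $C$ and $C'$ differ, so its side is unchanged and your conclusion stands.
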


\begin{proof}
Let $G$ have a nonseparating embedding with Y having vertices $v_{0}$, $v_{1}$, $v_{2}$ and $v_{3}$ and perform a Y-$\Delta$ exchange. By ambient isotopy, we may assume that the exchange takes place inside a sufficiently small neighbourhood $U$ of $v_{0}$ that contains only $v_{1}$, $v_{2}$, $v_{3}$. Since $U$ is bounded by a 0-homologous cycle, we may assume that $U$ is affine. Note that the Y-$\Delta$ exchange preserves projective planarity, and name the resulting embedding $G'$.\\

\begin{center}
    \includegraphics[scale=0.6]{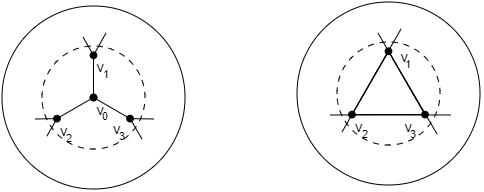}
\end{center}

Suppose to the contrary that $G'$ has a separating cycle $C'$. There are four cases to consider:

\begin{enumerate}
    \item $C'$ shares zero edges with ${(v_1, v_2), (v_1, v_3), (v_2, v_3)}$\\
    Then $U$ lies on one side of $C'$. Suppose that $C'$ separates $v_{i},v_{j}$ in $G'$, we must have $v_{i},v_{j}$ lies on two sides of $C'$ in $G$. Therefore $C'$ separates $v_{i},v_{j}$ in $G$, this is a contradiction.
    \item $C'$ shares one edge with ${(v_1, v_2), (v_1, v_3), (v_2, v_3)}$\\
    Without loss of generality suppose $C'$ consists of $L_{1}=(v_1, v_2)$ and $L_{2}=C'\setminus L_{1}$. Let $C$ be the cycle in $G$ consisting of $(v_0, v_1)$, $(v_0, v_2)$ and $L_{2}$. Suppose $C'$ separates $v_{i},v_{j}$ in $G'$, then $C$ separates $v_{i},v_{j}$ in $G$, which is a contradiction.
    \item $C'$ shares two edges with ${(v_1, v_2), (v_1, v_3), (v_2, v_3)}$\\
    Without loss of generality suppose $C'$ consists of $L_{1}=v_{1} \cup (v_1, v_2) \cup (v_1, v_3)$ and $L_{2}=C'\setminus L_{1}$. Let $C$ be the cycle in $G$ consisting of $(v_0, v_2)$, $(v_0, v_3)$ and $L_{2}$. Suppose $C'$ separates $v_{i},v_{j}$ in $G'$, then $v_{i},v_{j}$ lie on the other side of $U$. Therefore $C$ separates $v_{i},v_{j}$ in $G$, which is a contradiction.
    \item $C'$ shares 3 edge with ${(v_1, v_2), (v_1, v_3), (v_2, v_3)}$\\
    Then $C'$ is the cycle consisting of ${(v_1, v_2), (v_1, v_3), (v_2, v_3)}$. However, by assumption $U$ contains no other vertex, hence $C'$ cannot be a separating cycle.
\end{enumerate}
\end{proof}

\begin{col}
Given a particular $\Delta$-Y exchange that preserves projective planarity of $G$, this $\Delta$-Y exchange preserves separating projective planarity on G.
\end{col}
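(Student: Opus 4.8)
The plan is to derive this corollary from the Theorem by exploiting two facts: that $\Delta$-Y and Y-$\Delta$ exchanges are mutually inverse operations, and that within the class of projective planar graphs the properties \emph{separating projective planar} and \emph{nonseparating projective planar} are complementary. The latter is immediate from the definitions: a projective planar graph is separating projective planar precisely when it fails to be nonseparating projective planar, since the former asserts that \emph{every} embedding carries a separating $0$-homologous cycle while the latter asserts that \emph{some} embedding carries none.

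First I would fix a $\Delta$-Y exchange on $G$ that preserves projective planarity and let $G'$ denote the result. By hypothesis $G'$ is projective planar, so the separating/nonseparating dichotomy is meaningful for $G'$ — this is exactly why the hypothesis on preservation of projective planarity is needed (cf. Corollary 3.1, where projective planarity may fail). I would then argue by contraposition. Suppose $G'$ were nonseparating projective planar. The degree-three vertex introduced by the $\Delta$-Y exchange sits at the center of a Y in $G'$, and performing the Y-$\Delta$ exchange on precisely this Y returns $G$. By the Theorem, that Y-$\Delta$ exchange preserves nonseparating projective planarity, so $G$ would be nonseparating projective planar, contradicting the assumption that $G$ is separating projective planar. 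Hence $G'$ is separating projective planar, as claimed.

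The step requiring the most care is confirming that the configuration created by the $\Delta$-Y exchange is a genuine Y to which the Theorem applies, and that the inverse Y-$\Delta$ exchange on that Y recovers $G$ exactly — that is, that the two exchanges act as honest inverses on the relevant vertex neighbourhood. I would also make the complementarity explicit, so that the contrapositive conclusion "$G'$ is not nonseparating projective planar'' can legitimately be upgraded to "$G'$ is separating projective planar.'' Both points are bookkeeping once the Theorem is available, so I do not anticipate a substantive obstacle; the corollary is a formal consequence of the Theorem obtained by inverting the exchange and complementing the property.
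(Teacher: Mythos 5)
Your proof is correct, and its logical route differs from the paper's in a way worth noting. You argue by contraposition at the level of abstract graphs: assuming $G'$ is nonseparating projective planar, you undo the exchange (the Y-$\Delta$ exchange at the new degree-three vertex recovers $G$, since the three triangle edges were deleted when the $\Delta$-Y was performed) and invoke the preceding theorem (Y-$\Delta$ preserves nonseparating projective planarity) as a black box to conclude $G$ is nonseparating projective planar, contradicting the hypothesis; the only additional ingredients are that the two exchanges are mutually inverse on simple graphs and that, among projective planar graphs, \emph{separating} and \emph{nonseparating} are complementary by definition. The paper instead argues directly at the level of embeddings: since a Y-$\Delta$ exchange can always be carried out within the projective plane, every embedding of $G'$ is obtained, via the $\Delta$-Y exchange performed in the plane, from some embedding of $G$, and that embedding is separating because $G$ is separating projective planar; the paper then says ``the result then follows,'' which tacitly requires checking that a separating cycle of the $G$-embedding survives the exchange (rerouting it through the new vertex when it uses one or two of the triangle edges, and noting it cannot be the triangle itself). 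Your version buys economy and rigor: it needs no geometric tracking of cycles through the exchange, only the theorem's statement, whereas the paper's version re-runs the embedding correspondence and leaves precisely that cycle-transfer step implicit. Both arguments genuinely need the hypothesis that this particular $\Delta$-Y exchange preserves projective planarity, as you correctly point out.
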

\begin{proof}
Suppose $G$ is a separating projective planar graph. Name the graph after the $\Delta$-Y exchange to be $G'$. Since the Y-$\Delta$ exchanges preserve projective planarity, every embedding of $G'$ is obtained from a separating embedding of $G$ via a $\Delta$-Y exchange on the projective plane. The result then follows.
\end{proof}

\subsection{$K_{7}-2e$}

To find a new minor-minimal IPL graph, we performed $\Delta - Y$ exchanges on $K_{7}-2e$ in attempt to find a graph that is IPL that does not contain $K_{7}-2e$ or $K_{4,4}-e$ as a minor.\\

Up to symmetry there are two cases of removing two edges from $K_{7}$: the two edges being adjacent (Case 1) or nonadjacent (Case 2). Consider performing a $\Delta - Y$ exchange on a triangle in $K_{7}-2e$. By symmetry there are three cases: the triangle has no vertex with its edge removed from $K_{7}$, has 1 vertex with its edge removed, or has 2 vertices with their edges removed.

\begin{figure}[H]
\begin{center}
    \includegraphics[scale=0.6]{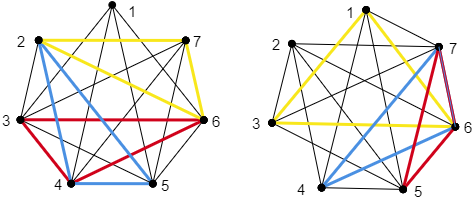}
    \caption{Two cases of removing two edges from $K_{7}$}
\end{center}
\end{figure}

\begin{theo} The below table summarizes the results, where a \checkmark  indicates that a new minor-minimal intrinsically linked graph exists.
\begin{center}
\begin{tabular}{ |c|c|c|c|} 
\hline
\# vertices with edges removed & 0 & 1 & 2\\ 
\hline
Case 1 & & \checkmark \checkmark & \\ 
\hline
Case 2 & \checkmark & & \checkmark \\ 
\hline
\end{tabular}
\end{center}
\end{theo}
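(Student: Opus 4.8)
The plan is to treat the theorem as a case analysis that produces, for each of the six cells in the table, either an explicit new minor-minimal IPL graph (when a checkmark is claimed) or an argument that the resulting graph fails to yield a new minor-minimal IPL graph. Concretely, I would fix a drawing of $K_7-2e$ in each of the two deletion cases, pick a representative triangle according to how many of its vertices are endpoints of deleted edges, perform the $\Delta$-$Y$ exchange, and name the resulting $8$-vertex graph $G'$. Since the $\Delta$-$Y$ exchange preserves the property of being IPL \cite{2008REU} and $K_7-2e$ is IPL, every such $G'$ is automatically IPL; the real content is deciding \emph{minor-minimality}, i.e.\ whether $G'$ contains $K_7-2e$ or $K_{4,4}-e$ (the two known minor-minimal IPL graphs on at most $8$ vertices) as a proper minor, and whether $G'$ itself is minor-minimal.

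For each cell I would carry out two checks in order. First, the \emph{positive} direction for the checkmarked cells: verify that $G'$ is genuinely minor-minimal IPL by showing that deleting or contracting any edge of $G'$ destroys the IPL property. The cleanest way is to exhibit, for each edge $e$ of $G'$, a linkless (nonsplit-link-free) embedding of $G'/e$ and of $G'-e$ in $\mathbb{R}P^3$, or equivalently to verify that none of the minors of $G'$ contains a known minor-minimal IPL graph. By symmetry of $G'$ the number of edge-orbits to check is small, so this reduces to a handful of explicit embeddings. Second, for the \emph{unchecked} cells (Case~1 with $0$ or $2$ removed, Case~2 with $1$ removed), I would show that the resulting $G'$ is \emph{not} a new minor-minimal IPL graph, either because it still contains $K_{4,4}-e$ or $K_7-2e$ as a minor (so it is not minor-minimal), or because a $Y$-$\Delta$ exchange returns it to $K_7-2e$ up to isomorphism. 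Tracking which vertices of the new degree-$3$ vertex inherit the missing edges is the bookkeeping that distinguishes the six cases, and the parity/adjacency of the deleted edges relative to the chosen triangle is exactly what forces a checkmark or not.

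The main obstacle will be the minor-minimality certificates in the positive cells: proving that a specific $8$-vertex graph admits \emph{no} embedding in $\mathbb{R}P^3$ with a nonsplit link is the hard direction of IPL, and one cannot simply invoke the $\Delta$-$Y$ preservation result for that, since preservation gives IPL for $G'$ but says nothing about its proper minors. I expect the argument there to rely on producing explicit good embeddings of each proper minor (showing they are \emph{not} IPL), which is a finite but delicate drawing task, together with the classification input that the only candidate IPL minors on $\le 8$ vertices are $K_{4,4}-e$ and $K_7-2e$. I would organize the write-up by first listing, for each $G'$, its vertex/edge data and a symmetry group to cut down the casework, then handle the two deletion cases in parallel, and finally assemble the six conclusions into the table; the subtlety to flag is that Case~1 with one removed vertex yields \emph{two} distinct new graphs (the two checkmarks), so I would be careful to separate those based on which incident triangle edge is chosen for the exchange.
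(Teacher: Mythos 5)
There is a genuine gap, and it comes from misreading what the theorem actually asserts. The checkmark claims only that a new minor-minimal IPL graph \emph{exists}, not that the graph $G'$ produced by the $\Delta$-$Y$ exchange is itself minor-minimal. The paper's proof is purely existential and needs no embeddings at all: the $\Delta$-$Y$ exchange preserves IPL, so $G'$ is IPL; every IPL graph contains some minor-minimal IPL graph as a minor; since $G'$ has only $8$ vertices, that minor has at most $8$ vertices, and the only \emph{known} minor-minimal IPL graphs of that size are $K_7-2e$ and $K_{4,4}-e$; a $\Delta$-$Y$ image of $K_7-2e$ cannot contain $K_7-2e$ as a minor, so the whole proof reduces to deciding whether $G'$ contains $K_{4,4}-e$ as a minor. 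The paper settles this by a short combinatorial argument (the new degree-$3$ vertex must play the role of the degree-$3$ vertex of $K_{4,4}-e$, and nonadjacent vertices of $G'$ must land in the same part of the bipartition, forcing five vertices into a part of size four in the checkmarked cases). Your treatment of the unchecked cells (exhibit $K_{4,4}-e$ as a minor) matches the paper, but your treatment of the checkmarked cells replaces this counting argument with a far harder and, as described, unachievable task.

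Concretely, two steps in your plan fail. First, to certify minor-minimality of $G'$ you propose to show each proper minor is not IPL ``by verifying that none of the minors of $G'$ contains a known minor-minimal IPL graph''; this is invalid, because the classification of IPL graphs on $8$ vertices is open (the paper says so explicitly), so absence of \emph{known} IPL minors does not certify that a graph is not IPL. The alternative you offer, exhibiting an explicit embedding of each proper minor in $\mathbb{R}P^3$ and proving it has no nonsplit link, is itself a major unproved undertaking, not a ``finite drawing task'': a picture does not certify linklessness in $\mathbb{R}P^3$. Second, the ``classification input that the only candidate IPL minors on $\le 8$ vertices are $K_{4,4}-e$ and $K_7-2e$'' cannot be used as a theorem: if it were true as stated, it would contradict the very result you are proving, since the new minor-minimal IPL graph promised by each checkmark has at most $8$ vertices and is neither of those two graphs. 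The correct and much weaker input is that these are the only \emph{known} such graphs, which suffices for the existential conclusion but not for your minor-minimality certificates.
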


\begin{proof}
Note that a graph resulting from performing a $\Delta - Y$ exchange on a $K_{7} - 2e$ does not contain a $K_{7} - 2e$ as a minor. Therefore it suffices to check whether the resulting graph contains $K_{4,4}-e$, the only known IPL graph on 8 vertices, as a minor. Take the two vertex sets of $K_{4,4}-e$ to be $V_{0}$, $V_{1}$ and separately consider different cases.
\subsubsection{Case 1}

In Case 1, the two edges removed from $K_{7}$ are adjacent. The $\Delta - Y$ exchange can be performed on four types of triangles in $K_{7} - 2e$.

\begin{figure}[H]
    \centering
    \subfigure[]{\includegraphics[width=0.24\textwidth]{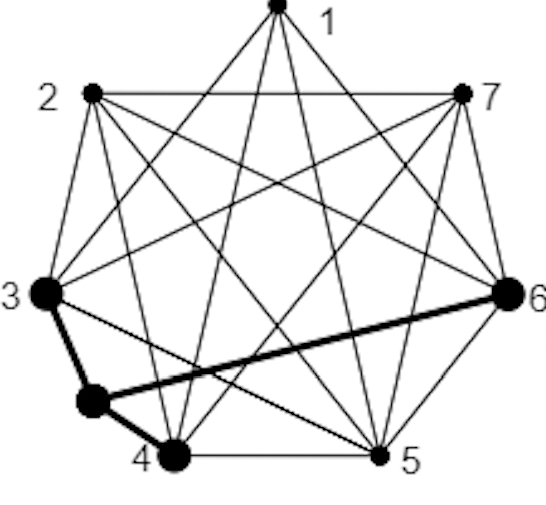}} 
    \subfigure[]{\includegraphics[width=0.24\textwidth]{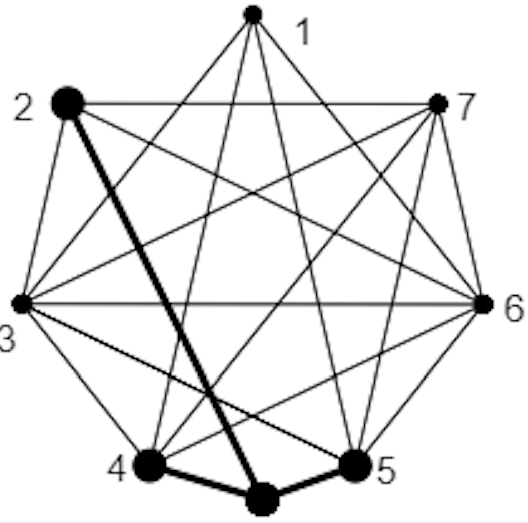}} 
    \subfigure[]{\includegraphics[width=0.24\textwidth]{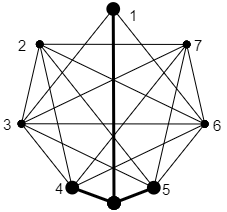}}
    \subfigure[]{\includegraphics[width=0.24\textwidth]{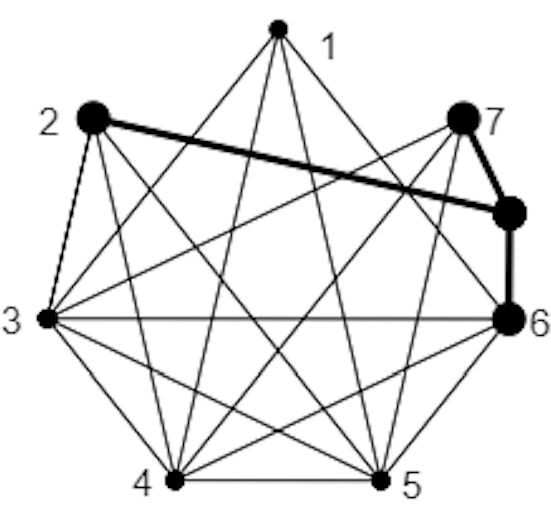}}
    \caption{(a) The $\Delta - Y$ exchange performed on a triangle that has zero vertices with an edge removed (b) The $\Delta - Y$ exchange performed on a triangle that has one vertex with an edge removed  (c) The $\Delta - Y$ exchange performed on a triangle that has one vertex with an edge removed  (d) The $\Delta - Y$ exchange performed on a triangle that has two vertices with an edge removed }
    \label{fig:foobar}
\end{figure}


\begin{itemize}
    \item Triangle has no vertex with an edge removed, as in Figure \ref{fig:foobar} (a).\\
    
    Without loss of generality suppose the triangle has vertices ${v_{3}, v_{4}, v_{6}}$. Consider $V_{0} = \{ v_{1}, v_{2}, v_{7}, v_{8} \}$ and $V_{1} = \{v_{3}, v_{4}, v_{5}, v_{6} \}$, then the resulting graph contains a minor of $K_{4,4}-e$.\\
    
    \item Triangle has exactly one vertex $v$ with an edge removed. There are two sub-cases as in Figure \ref{fig:foobar} (b), (c).\\
    
    Note that $v_{8}$ in the resulting graph has degree three, and corresponds to a degree three vertex in $K_{4,4}-e$ if the graph contains a minor of $K_{4,4}-e$. Therefore if $v_{i}$, $v_{j}$ are not connected by an edge, where $1 \leq i,j \leq 7$, they are in different vertex sets. 
    
    \begin{itemize}
        \item $v$ has one edge removed\\
        
        Without loss of generality suppose the triangle has vertices $v_{2}, v_{4}, v_{5}$. Since no edge connects $v_{7}$ with $v_{1}$, $v_{1}$ with $v_{2}$, $v_{2}$ with $v_{4}$, and $v_{4}$ with $v_{5}$, then $v_{7}$, $v_{1}$, $v_{2}$, $v_{4}$, $v_{5}$ must be in the same vertex set. This is a contradiction since a vertex set has four vertices. There is a new IPL graph in this case.
        
        \item $v$ has two edges removed\\
        
        Without loss of generality suppose the triangle has vertices $v_{1}, v_{4}, v_{5}$. 
        Since no edge connects $v_{1}$ with $v_{2}$, $v_{1}$ with $v_{4}$, $v_{1}$ with $v_{5}$, and $v_{1}$ with $v_{7}$, then $v_{1}$, $v_{2}$, $v_{4}$, $v_{5}$, $v_{7}$ must be in the same vertex set. This is a contradiction since a vertex set has four vertices. There is a new IPL graph in this case.
    \end{itemize}
    
    \item Triangle has two vertices with an edge removed, as in Figure \ref{fig:foobar} (d).\\
    
    Without loss of generality suppose the triangle has vertices ${v_{2}, v_{6}, v_{7}}$. Consider $V_{0} = \{v_{1}, v_{2}, v_{6}, v_{7}\}$ and $V_{1} = \{v_{3}, v_{4}, v_{5}, v_{8}\}$, then the resulting graph contains a minor of $K_{4,4}-e$.\\
\end{itemize}

\subsubsection{Case 2}

In Case 2, the two edges removed from $K_{7}$ are nonadjacent.

\begin{figure}[H]
    \centering
    \subfigure[]{\includegraphics[width=0.25\textwidth]{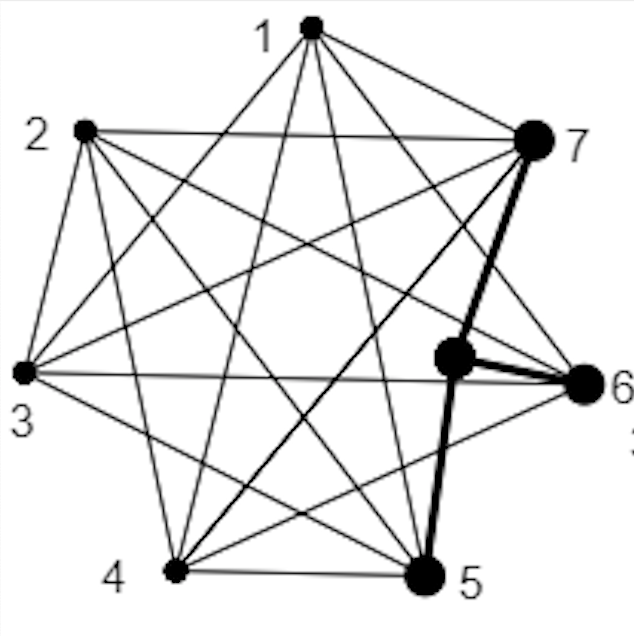}} 
    \subfigure[]{\includegraphics[width=0.25\textwidth]{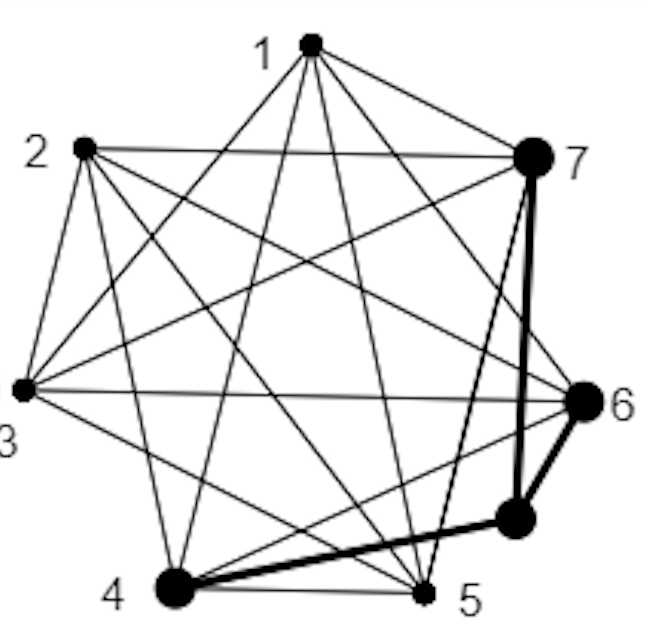}} 
    \subfigure[]{\includegraphics[width=0.25\textwidth]{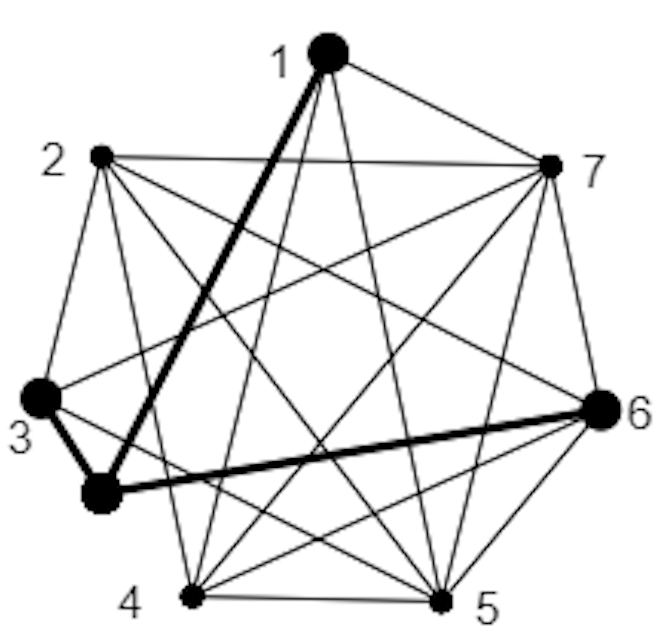}}
    \caption{(a) The $\Delta - Y$ exchange performed on a triangle that has zero vertices with an edge removed (b) The $\Delta - Y$ exchange performed on a triangle that has one vertex with an edge removed  (c) The $\Delta - Y$ exchange performed on a triangle that has two vertices with an edge removed }
    \label{fig:foobar2}
\end{figure}

    
\begin{itemize}
        \item Triangle has zero vertices with an edge removed, as in Figure \ref{fig:foobar2} (a).\\
        
        Without loss of generality suppose the triangle has vertices $v_{5}, v_{6}, v_{7}$. Since no edge connects $v_{5}$ with $v_{6}$ and $v_{6}$ with $v_{7}$, $v_{5}$, $v_{6}$, $v_{7}$ are in the same vertex set. Similarly $v_{1}$ and $v_{2}$, $v_{3}$ and $v_{4}$ are in the same vertex set. Since a vertex set has four vertices, it must be that $V_{0} = \{ v_{1}, v_{2}, v_{3}, v_{4} \}$ and $V_{1} = \{ v_{5}, v_{6}, v_{7}, v_{8} \}$. However, $v_{8}$ does not connect to any vertex in $V_{0}$. Therefore the graph does not contain a minor of $K_{4,4}-e$. There is a new IPL graph in this case.\\
        
        \item Triangle has one vertex with an edge removed, as in Figure \ref{fig:foobar2} (b).\\
        
        Without loss of generality suppose the triangle has vertices ${v_{4}, v_{6}, v_{7}}$. Consider $V_{0} = \{ v_{1}, v_{2}, v_{5}, v_{8} \}$ and $V_{1} = \{ v_{3}, v_{4}, v_{6}, v_{7} \}$, then the resulting graph contains a minor of $K_{4,4}-e$.\\
        
        \item Triangle has two vertices with an edge removed, as in Figure \ref{fig:foobar2} (c).\\
        
        Without loss of generality suppose the triangle has vertices $v_{1}, v_{3}, v_{6}$. Since no edge connects $v_{1}$ with $v_{2}$, $v_{1}$ with $v_{3}$, $v_{1}$ with $v_{6}$, and $v_{3}$ with $v_{4}$, then $v_{1}$, $v_{2}$, $v_{3}$, $v_{4}$, $v_{6}$ must be in the same vertex set. This is a contradiction since a vertex set has four vertices. There is a new IPL graph in this case.

    \end{itemize}

\end{proof}





\subsection{Irreducible graphs}
A graph $G$ is \textit{irreducible} for a surface $M$ if $G$ does not embed into $M$ and every proper subgraph of $G$ embeds into $M$. There are 103 irreducible graphs for the projective plane \cite{Glover}.


\begin{theo}
If a graph $G$ is irreducible for the projective plane, then it has an embedding in projective space with no a nonsplittable 2-link containing at least one 0-homologous component.
\end{theo}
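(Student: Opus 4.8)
The plan is to construct a single good embedding directly from the irreducibility hypothesis. Since $G$ is irreducible for the projective plane, every \emph{proper} subgraph embeds in $\mathbb{R}P^2$; in particular, deleting any edge $e$ (with endpoints $a,b$) yields a projective planar graph $G-e$. I would fix an embedding of $G-e$ into the central projective plane $\mathbb{R}P^2=\{z=0\}\subset\mathbb{R}P^3$ and then add $e$ back as an arc joining $a$ to $b$ that lies, apart from its endpoints, strictly in the upper region $\{z>0\}$. The goal is to prove that this embedding carries no nonsplit two-component link in which some component is $0$-homologous.

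The main tool I would record as a short lemma is an elementary splitting criterion: if one component $L$ of a two-component link lies in a $3$-ball of $\mathbb{R}P^3$ disjoint from the other component $L'$ (for instance, if $L$ bounds an embedded disk disjoint from $L'$, a regular neighborhood of which is such a ball), then the link is split. Thus for a link $L_1\cup L_2$ with $L_1$ $0$-homologous it suffices to confine one component to a ball missing the other. I would also use the planar fact that an essential simple closed curve $C\subset\mathbb{R}P^2$ has complement $\mathbb{R}P^2\setminus C$ an open disk, and note for context that two disjoint simple closed curves in $\mathbb{R}P^2$ cannot both be essential, since the generator of $H_1(\mathbb{R}P^2;\mathbb{Z}/2)$ has mod-$2$ self-intersection $1$.

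The argument then splits according to how the added edge $e$ is used. If $L_1$ does not use $e$, then $L_1\subset\mathbb{R}P^2$ is $0$-homologous, hence bounds a disk $D_1\subset\mathbb{R}P^2$; pushing the interior of $D_1$ off the plane to the side not containing $e$ (downward into $\{z<0\}$ when $L_2$ runs over $e$) produces a disk meeting $\{z=0\}$ only along $L_1$, hence disjoint from $L_2$, so the link is split. The same downward push handles the case where $L_2$ uses $e$ while $L_1$ is planar. Thus the only remaining possibility is that the $0$-homologous component itself runs over the new edge, $L_1=e\cup P_1$ with $P_1$ a path in the plane, which forces its companion $L_2$ to lie entirely in $\mathbb{R}P^2$.

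This last case is the one I expect to be the main obstacle. If $L_2$ is $0$-homologous, it bounds a disk in $\mathbb{R}P^2$ that can be pushed downward off $e$ to span $L_2$ away from $L_1$, so the crux is when $L_2$ is essential, whence $\mathbb{R}P^2\setminus L_2$ is an open disk $D^{\ast}$ containing $P_1$ and the endpoints $a,b$. Here I would exploit the homological hypothesis: projecting $e$ to the plane gives an arc $\gamma$ from $a$ to $b$, and since $[L_1]=[\gamma\cup P_1]=0$ in $H_1(\mathbb{R}P^2;\mathbb{Z}/2)$ while $[L_2]$ is the generator, the mod-$2$ count $\gamma\cdot L_2=(\gamma\cup P_1)\cdot L_2$ is even (as $P_1\cap L_2=\varnothing$). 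An even intersection number is exactly the obstruction to homotoping $\gamma$ rel endpoints into $D^{\ast}$, so $e$ may be ambient-isotoped within $\{z>0\}$ (never meeting $L_2\subset\{z=0\}$ nor the interior of $P_1$) until its shadow lies in $D^{\ast}$. Then $L_1$ is contained in a tent $D'\times[0,\varepsilon]$ over a compact disk $D'\subset D^{\ast}$, a $3$-ball disjoint from $L_2$, so the link is split; since splitness is isotopy invariant, the original link is split as well. Assembling the cases shows the embedding has no nonsplit link with a $0$-homologous component, which is the assertion of the theorem. The delicate point throughout is precisely this confinement: controlling the interaction of the lifted edge with an essential curve $L_2$ hinges on the parity argument forcing even shadow-crossings whenever $L_1$ is genuinely $0$-homologous.
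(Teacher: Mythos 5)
Your construction is exactly the paper's (embed $G-e$ in the plane $z=0$, add $e$ back ``above''), and your final case even supplies detail the paper omits; but the first case of your analysis rests on a step that is false, and it is not repairable as stated. When the $0$-homologous component $L_1$ does not use $e$, you push the disk $D_1\subset\mathbb{R}P^2$ bounded by $L_1$ ``to the side not containing $e$,'' i.e.\ downward into $\{z<0\}$. This tacitly assumes $\mathbb{R}P^2$ is two-sided in $\mathbb{R}P^3$, and it is not: a normal vector to the plane transported along a path that crosses the boundary circle of the defining ball returns flipped, so if $D_1$ crosses that circle, every push-off of $D_1$ points ``up'' (in model coordinates) over part of $D_1$ and ``down'' over the rest. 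The argument genuinely breaks when the planar path $P_2=L_2\setminus e$ lies in the interior of $D_1$ and crosses the boundary circle an odd number of times (equivalently, $L_2$ is $1$-homologous): then $a$ and $b$ sit in regions of opposite push-off sign, so every push-off of $D_1$ meets $e$ near $a$ or near $b$. Worse, in this configuration the link is actually nonsplit. Pass to the double cover $S^3\to\mathbb{R}P^3$: since $L_2$ is essential it lifts to a single circle $\widetilde{L}_2$, while $L_1$ and $D_1$ lift to two disjoint copies; because $P_2$ crosses the boundary circle oddly, the two lifts of $e$ lie on opposite sides of the lifted sphere, and $\widetilde{L}_2$ pierces a small push-off of one lift of $D_1$ in exactly one point, so it has linking number $\pm 1$ with the corresponding lift of $L_1$. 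A splitting ball in $\mathbb{R}P^3$ would lift to a ball in $S^3$ containing that lift of $L_1$ and missing $\widetilde{L}_2$, forcing linking number $0$ --- contradiction.

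This failure occurs under exactly the hypotheses your proof allows, namely an arbitrary edge $e$ and an arbitrary embedding of $G-e$. For example, $G=K_5\sqcup K_5$ is irreducible for the projective plane; take $e=ab$ in the first $K_5$, embed the intact second $K_5$ in $\mathbb{R}P^2$ with a face $F$ bounded by a cycle $C$, and embed the planar graph $K_5-e$ inside $F$, positioned so that a path $P$ from $a$ to $b$ through a third vertex crosses the boundary circle exactly once (legitimate, since the boundary circle may be taken to be any essential simple closed curve of $\mathbb{R}P^2$ transverse to the graph). Then $L_1=C$ is $0$-homologous, disjoint from $L_2=e\cup P$, and by the computation above this link is nonsplit; so for this choice the constructed embedding does not have the claimed property. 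Hence the theorem cannot be proved for ``any edge and any embedding'': one must exploit the freedom in choosing $e$ and the embedding of $G-e$ to exclude such configurations, a point that the paper's own one-sentence proof also glosses over. A milder instance of the same oversight appears in your crux case: the ``tent'' $D'\times[0,\varepsilon]$ is not a well-defined subset of $\mathbb{R}P^3$ (let alone a ball) once $D'$ crosses the boundary circle, which happens whenever $P_1$ does. That case's conclusion is nevertheless true, and can be repaired using one-sidedness honestly: the complement of a regular neighborhood of the essential curve $L_2$ is a solid torus in which $\mathbb{R}P^2\setminus L_2$ yields a meridian disk, and the evenness of $P_1$'s boundary crossings (your parity argument, used for side-selection rather than for moving the shadow) lets one push that meridian disk off $L_1$; cutting along it produces the required splitting ball.
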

\begin{pro}
Since $G$ is irreducible, there exists a projective planar embedding of $G - e$. Suppose this embedding lies on $z=0$ and add $e$ onto this embedding in projective space. This is the only edge not on $z=0$ and it lies above all other edges, hence this embedding does not have a link with a 0-homologous cycle.
\end{pro}
\subsection{$G + \overline{K_2}$}

\begin{lem} [Bustamante et al \cite{2008REU}]
Let G=($P_1\cup P_2$)$\backslash$ $\{(v_1,v_2)\}$ be a graph, where $P_1,P_2$ are graphs in the Petersen family and $V(P_1\cap P_2)=\{v_1,v_2\}.$ Then $G$ is IPL. 
\end{lem}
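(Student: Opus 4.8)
The plan is to argue by contradiction: fix an embedding $f$ of $G=(P_1\cup P_2)\setminus\{(v_1,v_2)\}$ in $\mathbb{R}P^3$ and assume it contains no nonsplit link. I would lean on two elementary facts. First, a $1$-homologous (essential) cycle cannot lie in any $3$-ball, since every cycle in a ball is null-homotopic; hence \emph{any two disjoint essential cycles automatically form a nonsplit link}. Second, because $\pi_1(\mathbb{R}P^3)=H_1(\mathbb{R}P^3;\mathbb{Z}/2)=\mathbb{Z}/2$, a cycle is $0$-homologous iff it is null-homotopic, so if a subgraph $H$ is embedded with \emph{every} cycle $0$-homologous, then $f|_H$ lifts to the double cover $S^3$. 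If in addition $H$ has a Petersen-family minor, then the lifted copy is intrinsically linked in $S^3$ \cite{robertson}, giving a nonsplit link of two null-homologous cycles upstairs; this link cannot become split downstairs, since a separating ball in $\mathbb{R}P^3$ lifts to two disjoint balls in $S^3$. This yields the dichotomy I will use: \emph{a graph with a Petersen-family minor, embedded in $\mathbb{R}P^3$ with no nonsplit link, must contain an essential cycle.}

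First I would note that $G$ itself has a Petersen-family minor: since each $P_i$ is $3$-connected, $P_2\setminus\{(v_1,v_2)\}$ has a $v_1v_2$-path, and contracting it to a single edge restores $(v_1,v_2)$ and exhibits $P_1$ as a minor after deleting the rest of $P_2$. Thus, under the contradiction hypothesis, $f(G)$ contains an essential cycle, i.e.\ the pulled-back class $[w]\in H^1(G;\mathbb{Z}/2)$ (assigning to each edge the parity of its transverse intersections with a fixed generic $\mathbb{R}P^2$) is nonzero, and the essential cycles are exactly the $[w]$-odd cycles. Since $P_1\cap P_2=\{v_1,v_2\}$, a Mayer--Vietoris splitting gives $H_1(G;\mathbb{Z}/2)\cong H_1(P_1\setminus\{(v_1,v_2)\})\oplus H_1(P_2\setminus\{(v_1,v_2)\})\oplus\mathbb{Z}/2$, so $[w]$ has an internal part $\phi_1$ on $P_1\setminus\{(v_1,v_2)\}$, an internal part $\phi_2$ on $P_2\setminus\{(v_1,v_2)\}$, and a single ``cross'' coordinate carried by cycles that traverse both factors through $v_1$ and $v_2$.

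The goal is then to produce \emph{two vertex-disjoint essential cycles}, contradicting the hypothesis via the first fact. When $\phi_1\neq0$ and $\phi_2\neq0$, each factor carries an odd cycle internally, and I would use $3$-connectivity of the Petersen-family graphs to route these cycles off the cut $\{v_1,v_2\}$; since the factors meet only there, the two odd cycles then become disjoint. The genuinely hard case — and the main obstacle — is when every essential cycle is forced to be of cross type, since any two such cycles meet at $\{v_1,v_2\}$ and no two are disjoint. To attack this case I would complete $P_1$ to a subdivided Petersen graph $H_1=(P_1\setminus\{(v_1,v_2)\})\cup Q$ along a $v_1v_2$-path $Q$ in $P_2$; the essential class of $f|_{H_1}$ is then concentrated on the single subdivided edge $(v_1,v_2)$.

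I expect the real work to be the sublemma needed here: \emph{a Petersen-family graph embedded in $\mathbb{R}P^3$ whose essential homology is dual to a single edge still contains a nonsplit link.} Granting it, the link lies inside $H_1\subseteq G$ and finishes the proof, and the remaining mixed case ($\phi_1\neq0=\phi_2$, say) reduces to the same sublemma once the internal odd cycle of $P_1$ is routed off the completing path. I would prove the sublemma either by a refined Conway--Gordon count in $\mathbb{R}P^3$ in which intersections with the fixed $\mathbb{R}P^2$ enter the parity, or graph by graph, by locating a $K_{4,4}-e$ (or other known IPL) subgraph or minor compatible with the single-edge essential class. Verifying this uniformly across the seven Petersen-family graphs, together with the rerouting arguments that keep the two cycles off $\{v_1,v_2\}$, is where the bulk of the effort lies.
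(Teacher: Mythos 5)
This lemma is one the paper does not prove at all: it is stated with the attribution to Bustamante et al.\ \cite{2008REU} and used as an imported result (in the proposition that $G+\overline{K_2}$ is IPL for nonouter-projective-planar $G$), so there is no in-paper argument to compare against, and your proposal has to be judged as a reconstruction of the source's proof. Your two basic tools are the right ones, and they are exactly the tools of \cite{2008REU}: the lifting dichotomy (a subgraph with a Petersen-family minor all of whose cycles are 0-homologous lifts to $S^3$, acquires a nonsplit link there, and that link cannot become split downstairs) and the fact that two disjoint 1-homologous cycles are never split. The Petersen-minor observation for $G$ and the Mayer--Vietoris bookkeeping are also fine. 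The problem is that at both places where you say the real work lies, the work is genuinely missing, and in one of them the step you propose would fail outright.

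First, the case $\phi_1\neq 0\neq\phi_2$ is not an easy case, and 3-connectivity cannot ``route the cycles off the cut.'' Being 1-homologous is a property of the embedded cycle, not of the abstract cycle, so a rerouted cycle generally lies in a different homology class. Concretely, suppose that in the given embedding every edge of $P_1'=P_1\setminus\{(v_1,v_2)\}$ with odd crossing parity is incident to $v_1$; then every essential cycle of $P_1'$ passes through $v_1$, since an essential cycle uses an odd number of odd edges but at most two edges at $v_1$. If the same holds in $P_2'$, then, because every cross cycle passes through $v_1$ as well, \emph{every} essential cycle of the embedding goes through $v_1$. Such embeddings exist (route exactly one edge at $v_1$ in each of $P_1'$, $P_2'$ oddly across the wall), they satisfy $\phi_1\neq0\neq\phi_2$, and in them no two disjoint essential cycles exist at all. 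So your main mechanism --- producing two disjoint essential cycles --- can never fire in this configuration; the nonsplit link promised by the lemma must involve 0-homologous components and be produced by a linking-number argument, which your outline does not contain.

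Second, the cross-type case is deferred entirely to a sublemma (a Petersen-family graph embedded in $\mathbb{R}P^3$ with its essential class dual to a single edge still contains a nonsplit link) that you do not prove, and that sublemma is the crux of the whole lemma, not a technicality. Both routes you suggest for it hit real obstacles. A Conway--Gordon count in $\mathbb{R}P^3$ must be restricted to pairs of disjoint cycles in which at least one component is 0-homologous (otherwise the integer linking number is undefined), and the crossing-change invariance of the mod 2 sum then has to be re-established for the relevant homotopy class and evaluated on a model embedding. The lifting route has a subtler trap: the preimage of the embedded graph in $S^3$ does contain a subdivision of the Petersen-family graph and hence a nonsplit link upstairs, but that link need not project to a link at all --- the two components can intersect antipodally downstairs, and a component running through the lift of the subdivided edge projects to a closed walk hitting $v_1$ and $v_2$ twice rather than to an embedded cycle. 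As written, your argument is complete only when two disjoint essential cycles happen to exist or when no essential cycle exists at all; every other configuration, including the two isolated above, is left open, and filling them is precisely the content of the proof in \cite{2008REU}.
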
\label{Bus1}

If a connected graph $G$ is \emph{$k$-connected}, then $G$ must have more than $k$ vertices and it remains connected whenever fewer than $k$ vertices are removed. The \emph{$k$-connectivity} of a graph is the maximum $k$ for which the graph is \emph{$k$-connected}.

\begin{lem} [Bustamante et al \cite{2008REU}]
There are 18 IPL graphs with 3-connectivity that can be obtained from $K_6 \therefore K_6$ by $\Delta$-$Y$ exchanges.
\end{lem}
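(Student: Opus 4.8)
The plan is to establish the claim that there are exactly 18 IPL graphs with 3-connectivity obtainable from $K_6 \therefore K_6$ via $\Delta$-$Y$ exchanges. Since this is cited as a prior result (Bustamante et al \cite{2008REU}), my proof reconstruction would verify it by direct enumeration, anchored by the two structural facts already available: first, that $\Delta$-$Y$ exchanges preserve the IPL property (stated in the introduction and attributed to \cite{2008REU}), and second, that $K_6 \therefore K_6$ is itself IPL (also stated in the introduction). The $\Delta$-$Y$ orbit of a graph under repeated exchanges is finite, so the enumeration is in principle a finite computation; the content of the statement is the precise count, restricted to those members of the orbit that happen to be $3$-connected.

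First I would fix the structure of $K_6 \therefore K_6$: it has nine vertices (three shared $v_1, v_2, v_3$ plus three private to each copy) and a determinate edge set obtained by taking two $K_6$'s and deleting the triangle $\{(v_1,v_2),(v_1,v_3),(v_2,v_3)\}$. I would catalogue its triangles up to the automorphism group of the graph, since $\Delta$-$Y$ exchanges are performed on triangles and symmetric triangles yield isomorphic results. Each triangle admits a $\Delta$-$Y$ exchange (replacing the triangle by a degree-three central vertex joined to the triangle's three vertices), and each degree-three vertex whose neighbours are pairwise connectable admits the inverse $Y$-$\Delta$ exchange. I would then generate the orbit by breadth-first search: starting from $K_6 \therefore K_6$, apply all inequivalent $\Delta$-$Y$ moves, reduce the resulting graphs up to isomorphism, and iterate until no new isomorphism classes appear. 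Because each exchange trades a triangle for a $Y$ (net change in edges is zero but vertex count increases by one) and the graphs stay within a bounded size, the orbit terminates.

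Having generated the full orbit, I would filter it by the two defining constraints of the statement. For each graph in the orbit I would (i) test $3$-connectivity directly by checking that no two-vertex cut disconnects it, and (ii) note that the IPL property is automatic throughout the orbit because every graph is related to the IPL graph $K_6 \therefore K_6$ by a sequence of exchanges, and $\Delta$-$Y$ (and its inverse $Y$-$\Delta$) preserve IPL. The count of $18$ is then the number of $3$-connected isomorphism classes surviving the filter. I would cross-check the tally against the known members — for instance confirming that $K_{4,4}-e$ and $K_7 - 2e$ type graphs appear where expected — to guard against an over- or under-count from missed symmetries.

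The main obstacle is the isomorphism-reduction during orbit generation: naively applying exchanges on every triangle produces many redundant copies, and undercounting the automorphisms of intermediate graphs (not just of $K_6 \therefore K_6$) can cause the same isomorphism class to be recorded twice or, worse, can cause distinct classes to be conflated if the isomorphism test is applied too coarsely. Carefully tracking the symmetry of each intermediate graph, and deciding correctly when a candidate $\Delta$-$Y$ move is genuinely new versus equivalent to one already performed, is the delicate bookkeeping on which the exact figure $18$ depends. A secondary subtlety is ensuring the $3$-connectivity filter is applied only at the end to the final isomorphism classes, since connectivity can fluctuate along an exchange path and a $3$-connected endpoint may be reachable only through lower-connectivity intermediates.
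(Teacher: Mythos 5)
First, a point of reference: the paper does not prove this lemma at all --- it is stated as a known result quoted from Bustamante et al.\ \cite{2008REU}, so there is no in-paper argument to compare yours against. Your plan (generate the $\Delta$-$Y$ orbit of $K_6 \therefore K_6$ up to isomorphism, filter by $3$-connectivity, and get the IPL property for free from the fact that $\Delta$-$Y$ exchanges preserve IPL) is the natural way to verify the statement and is surely the shape of the argument in the cited source. One supporting remark needs tightening: finiteness of the orbit does not follow from graphs ``staying within a bounded size,'' which you assert without justification, but from the fact that a $\Delta$-$Y$ exchange destroys at least one triangle and creates none, so the number of triangles strictly decreases and every exchange sequence has length at most the number of triangles of $K_6 \therefore K_6$.

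That said, there are three concrete problems. Most importantly, your proposal never actually produces the number $18$: the entire content of the lemma is that count, and you defer it to an unexecuted breadth-first search, so what you have is a verification scheme rather than a proof. Second, your proposed sanity check --- ``confirming that $K_{4,4}-e$ and $K_7-2e$ type graphs appear where expected'' --- cannot work: $\Delta$-$Y$ exchanges preserve the number of edges, so every member of the orbit has exactly $24$ edges and at least $9$ vertices, whereas $K_{4,4}-e$ has $15$ edges on $8$ vertices and $K_7-2e$ has $19$ edges on $7$ vertices; neither can occur in the orbit, so this cross-check would either fail vacuously or mislead you into thinking you had missed graphs. Third, you claim that $Y$-$\Delta$ exchanges preserve IPL; the paper asserts preservation only for $\Delta$-$Y$, and its own Lemma 3.1 (nonprojective-planarity is not always preserved by $Y$-$\Delta$) is a warning that reverse-direction preservation claims of this kind can fail. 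Fortunately your argument never needs it, since every orbit member is reached from $K_6 \therefore K_6$ by $\Delta$-$Y$ moves alone, so you should simply delete that claim rather than rely on it.
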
\label{Bus2}

\begin{proposition}
Let $G$ be a nonouter-projective-planar graph. Then $G + \overline{K_2}$ is IPL.
\end{proposition}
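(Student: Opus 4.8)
The plan is to reduce the statement to a finite check, using the known classification of minor-minimal nonouter-projective-planar graphs together with the two IPL lemmas quoted just above. Since ``having an outer-projective-planar embedding'' is a minor-closed property, the class of nonouter-projective-planar graphs is exactly those containing one of the $32$ minor-minimal (irreducible for outer-projective-planarity) graphs as a minor. I would first observe that IPL is itself preserved under taking supergraphs in the relevant sense: if $H$ is IPL and $H$ is a minor of $K$, then $K$ is IPL, because any embedding of $K$ restricts to an embedding of (a subdivision/minor model of) $H$, which must already contain a nonsplit link. Likewise, I claim the operation $G \mapsto G + \overline{K_2}$ is monotone with respect to minors in the first coordinate: if $H$ is a minor of $G$, then $H + \overline{K_2}$ is a minor of $G + \overline{K_2}$ (the two cone vertices and their edges survive all deletions/contractions performed inside $G$). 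Combining these two monotonicity facts, it suffices to prove the statement only for the $32$ minor-minimal graphs $G$: if $G_0 + \overline{K_2}$ is IPL for each minor-minimal nonouter-projective-planar $G_0$, then for any nonouter-projective-planar $G$ we have some $G_0$ as a minor, hence $G_0 + \overline{K_2}$ is a minor of $G + \overline{K_2}$, and IPL propagates upward.

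The core of the argument is therefore the finite verification: for each of the $32$ graphs $G_0$ in the list from \cite{Archdeacon}, I would exhibit that $G_0 + \overline{K_2}$ contains a known IPL graph as a minor. The two target families are supplied by the Lemmas above: either the ``glued-Petersen'' graphs $(P_1 \cup P_2)\setminus\{(v_1,v_2)\}$ of the first lemma, or one of the $18$ three-connected IPL graphs obtained from $K_6 \therefore K_6$ by $\Delta$--$Y$ exchanges of the second lemma. The key structural idea is that the two cone vertices $w_0, w_1$ each connect to \emph{every} vertex of $G_0$, so $w_0$ and $w_1$ act as apex vertices; a nonouter-projective-planar graph $G_0$ has no face containing all its vertices, and coning twice tends to force the appearance of two overlapping highly-connected pieces that share exactly two vertices — precisely the shape of the first lemma's hypothesis. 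For each $G_0$ I would identify an explicit minor model: partition or select vertices of $G_0$ so that, together with $w_0$ and $w_1$, one recovers two Petersen-family subgraphs meeting in two vertices (or directly one of the $18$ graphs). In practice many of the $32$ cases will be handled uniformly because they share common substructures (for instance $K_{3,4}$-type or $K_5$-type cores), so I would group the list and treat representatives, recording the vertex identifications in each group.

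I expect the main obstacle to be the bookkeeping of the finite case analysis: verifying, robustly and without omission, that each of the $32$ minor-minimal graphs yields the required IPL minor after coning with $\overline{K_2}$. The conceptual content is light — monotonicity of IPL and of $G \mapsto G + \overline{K_2}$ under minors — but ensuring the list of $32$ is complete and correctly transcribed, and that no case secretly fails to produce a Petersen-family-based minor, is where care is needed. A secondary subtlety is confirming the precise monotonicity claim $H \preceq G \implies H + \overline{K_2} \preceq G + \overline{K_2}$: edge contractions inside $G$ must be checked not to interfere with the cone edges, which is immediate since contracting an edge $uv$ of $G$ merely merges the two (identical) cone-edge bundles at $u$ and $v$, and deletions inside $G$ leave the cone edges to the surviving vertices intact. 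Once monotonicity is nailed down, the proof is complete as soon as the table of $32$ verifications is filled in, each entry naming the IPL minor and its vertex correspondence.
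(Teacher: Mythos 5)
Your opening reduction is correct and matches the paper: since having an outer-projective-planar embedding is minor-closed, a nonouter-projective-planar $G$ contains one of Archdeacon's 32 minor-minimal graphs $G_0$ as a minor; IPL passes from a minor to any graph containing it; and $H \preceq G$ implies $H + \overline{K_2} \preceq G + \overline{K_2}$ (contraction of $uv$ in $G$ merges the two cone-edge bundles, deletions leave cone edges to surviving vertices intact). The paper compresses all of this into one sentence, so spelling it out is harmless and fine.

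The genuine gap is that everything after the reduction --- which is the entire mathematical content of the paper's proof --- is left as a promissory note, and moreover your proposed list of target IPL minors is not the one that is known to work. You plan to exhibit, inside each $G_0 + \overline{K_2}$, either a glued pair of Petersen-family graphs $(P_1 \cup P_2)\setminus\{(v_1,v_2)\}$ or one of the 18 graphs descended from $K_6 \therefore K_6$. The paper instead uses $K_{4,4}-e$ itself (IPL by \cite{2008REU}) as the target for seven of its nine representative cases ($\kappa_1,\theta_1,\zeta_1,\eta_1,\gamma_1,\delta_1,\epsilon_1$), exhibiting an explicit red/green bipartition witnessing a $K_{4,4}-e$ minor, and reserves the glued-$K_6$ lemmas only for $\alpha_1$ and $\beta_1$. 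Note that $K_{4,4}-e$ cannot arise from your first target family (two Petersen-family graphs sharing two vertices have at least ten vertices), so your plan omits the workhorse minor entirely, and you offer no evidence that your chosen targets can be found in those seven cones. You also miss the paper's second structural shortcut: the 32 graphs fall into families related by $\Delta$-$Y$ exchanges, and since $\Delta$-$Y$ preserves IPL and adding edges preserves IPL (so an exchange performed inside $G_0$ propagates to $G_0 + \overline{K_2}$, whose extra cone edges at the new vertex only help), it suffices to verify one representative per family --- nine explicit checks rather than 32. Without either those explicit minor models or this reduction, what you have is a correct framing of the problem, not a proof.
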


\begin{proof}
Given G nonouter-projective-planar graph, it must contain one of the 32 minor-minimal nonouter-projective-planar graph as a minor \cite{Archdeacon}, so it is sufficient to prove these special cases. In the following graphs, the idea is to prove that $K_{4,4}-e$ is a minor, so red vertices will indicate the vertices that belong to a set of $K_{4,4}-e$ and green vertices belong to the other set. The vertices will be labeled with a number that indicates the number of edges that a vertex shares with the vertices of the other set. In addition, vertices with no edges drawn are the vertices of $\overline{K_2}$ and they are supposed to be connected to the rest of the vertices of $G$, but their edges have not been drawn in order to have a more understandable drawing.

\begin{figure}[H]
\centering
\begin{minipage}[b]{0.3\linewidth}
    \includegraphics[scale=0.55]{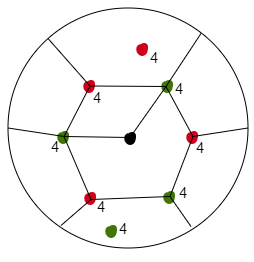}
    \caption{$\kappa_1+\overline{K_2}$}
\end{minipage}
\quad
\begin{minipage}[b]{0.3\linewidth}
    \includegraphics[scale=0.40]{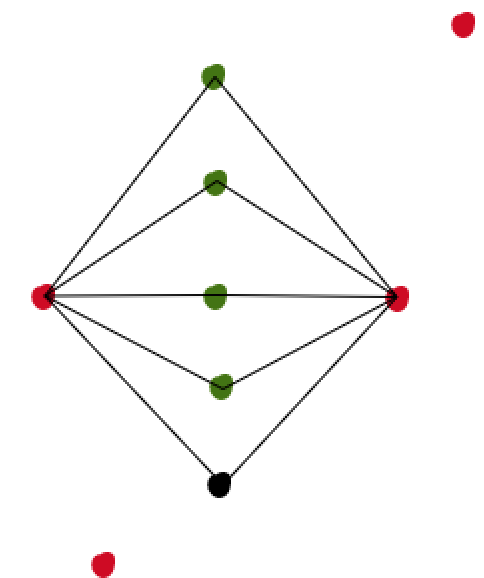}
    \caption{$\theta_1+\overline{K_2}$}
\end{minipage}
\quad
\begin{minipage}[b]{0.3\linewidth}
    \includegraphics[scale=0.55]{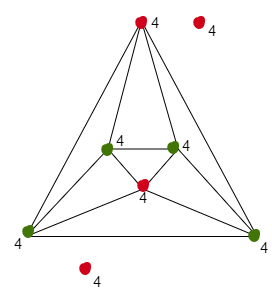}
    \caption{$\zeta_1+\overline{K_2}$}
\end{minipage}

\end{figure}

\begin{figure}[H]
\centering
\begin{minipage}[b]{0.3\linewidth}
    \includegraphics[scale=0.55]{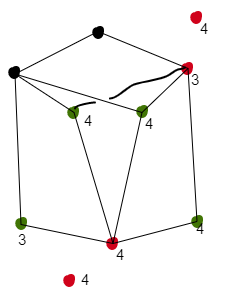}
    \caption{$\eta_1+\overline{K_2}$}
\end{minipage}
\quad
\begin{minipage}[b]{0.3\linewidth}
    \includegraphics[scale=0.55]{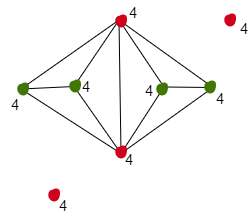}
    \caption{$\gamma_1+\overline{K_2}$}
\end{minipage}
\quad
\begin{minipage}[b]{0.3\linewidth}
    \includegraphics[scale=0.55]{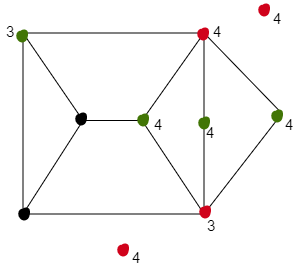}
    \caption{$\delta_1+\overline{K_2}$}
\end{minipage}
\end{figure}

\begin{figure}[H]

    \begin{center}
    \includegraphics[scale=0.65]{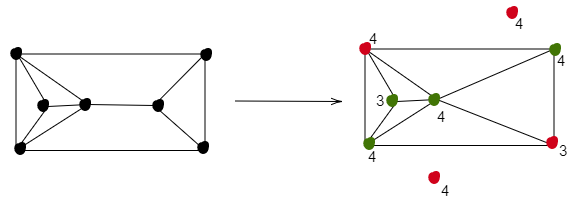}
    \caption{$\epsilon_1+\overline{K_2}$}
\end{center}

\end{figure}

For the cases of $\alpha_1$ and $\beta_2$ graphs, we use lemmas mentioned at the beginning of this subsection. Let $G_1=(K_6\cup K_6) \backslash \{(e,f)\}$ be a graph, where $V(K_6\cap K_6)=\{e,f\},$ and $G_2=K_6 \therefore K_6.$

\begin{figure}[H]
    \begin{center}
    \includegraphics[scale=0.75]{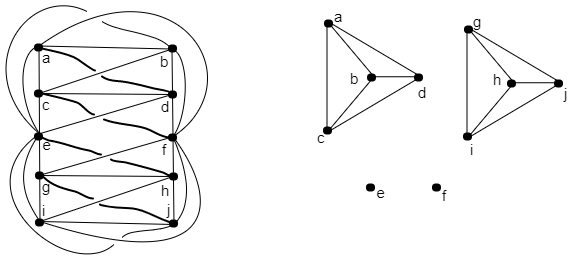}
    \caption{$G_1$ and $\alpha_1+\overline{K_2}$}
\end{center}

\end{figure}
\begin{figure}[H]

\begin{center}
    \includegraphics[scale=0.75]{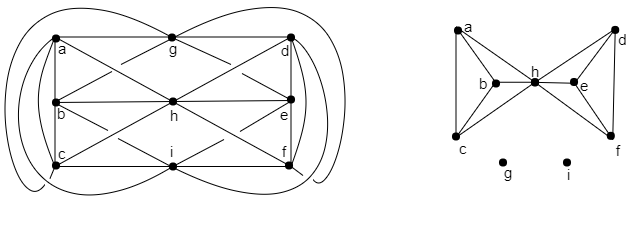}
    \caption{$G_2$ and $\beta_1+\overline{K_2}$}
\end{center}

\end{figure}

By the labeling, we see that $G_1$ is a minor of $\alpha_1+\overline{K_2}$ and $G_2$ is a minor of $\beta_1+\overline{K_2},$ so $\alpha_1+\overline{K_2}$ and $\beta_1+\overline{K_2}$ are IPL. Thus, by using $\Delta$-$Y$ exchanges, the rest of the graphs in the families will also be IPL.

\end{proof}




\subsection{Conditions for $G + \bar{K_{2}}$ to have no nonsplit links of different types}

It is an open question to determine the smallest $n$ such that $K_{n}$ has a nonsplit link with a 0 homologous cycle in every embedding into projective space. In this section we discuss conditions on an embedding of $G + \bar{K_{2}}$ to have no nonsplit links of a certain homology type.\\

Throughout this section, consider $G$, a connected graph with a projective planar embedding $f(G)$. Let $\bar{K_{2}}$ have the vertex set $\{w_{0},w_{1}\}$. Take  $f(G + \bar{K_{2}})$ to be the embedding onto $\mathbb{R}P^{3}$ with $f(G)$ in $z=0$, $w_{0}$ above $z=0$, and $w_{1}$ below $z=0$ such that every edge connecting ${w_{0},w_{1}}$ to $G$ does not cross the boundary and is straight. Let an edge be \textit{0-homologous} if it crosses the boundary an even number of times, and let an edge be \textit{1-homologous} if it crosses the boundary an odd number of times. \\

We deduce the necessary and sufficient conditions for $f(G + \bar{K_{2}})$ to have no nonsplit link by proving $f(G + \bar{K_{2}})$ has no nonsplit link of two 0-homologous cycles if and only if $f(G)$ is a nonseparating projective planar embedding, $f(G + \bar{K_{2}})$ has no nonsplit link of a 0-homologous cycle and a 1-homologous cycle if and only if $f(G)$ has no separating 1-homologous cycle or a 1-homologous cycle with vertices on both sides, and $f(G + \bar{K_{2}})$ has no nonsplit link of two 1-homologous cycles if and only if every 1-homologous edge share a common vertex in $f(G)$ or $f(G)$ has exactly three 1-homologous edges intersecting pairwise.

\begin{figure}[H]
    \begin{center}
    \includegraphics[scale=0.6]{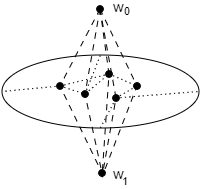}
    \caption{$f(G + \bar{K_{2}})$}
\end{center}

\end{figure}

Since ambient isotopy preserves split and nonsplit links, assume that no vertex of $f(G + \bar{K_{2}})$ lies on the boundary. Assume also that in $f(G + \bar{K_{2}})$, no loop exists and at most one edge connects any two vertices. 

\subsubsection{Nonsplit links between two 0-homologous cycles}

The properties of separating and nonseparating projective planar graphs are very helpful in characterizing links in projective space. The theorems in this section reveal more about this relationship. We have written a related paper where we characterized many members of the set of minor-minimal separating projective planar graphs \cite{REU2021planar}.

\begin{theo}
\label{Theorem1}
The embedding $f(G + \bar{K_{2}})$ has no nonsplit link of two 0-homologous cycles if and only if $f(G)$ is a nonseparating projective planar embedding.
\end{theo}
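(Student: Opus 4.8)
The plan is to prove both directions of the biconditional by relating separating cycles in the base embedding $f(G)$ to split/nonsplit links in the three-dimensional embedding $f(G+\overline{K_2})$. The key geometric observation is that because $f(G)$ lies in the plane $z=0$, any $0$-homologous cycle $C$ in $f(G)$ bounds a disk, and this disk separates $\mathbb{R}P^2$ (the slice $z=0$) into two regions, say $R_1$ and $R_2$. The vertices $w_0$ (above) and $w_1$ (below) together with their connecting edges allow us to build cycles through $w_0$ and through $w_1$ that lie over these regions. I would set up a dictionary: a separating $0$-homologous cycle $C$ in $f(G)$ witnessing vertices $u_1 \in R_1$ and $u_2 \in R_2$ gives rise to two $0$-homologous cycles in $f(G+\overline{K_2})$ that form a nonsplit link, and conversely.

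For the forward direction I would argue the contrapositive: suppose $f(G)$ is a separating projective planar embedding, so there is a $0$-homologous cycle $C$ with vertices $u_1, u_2$ of $G\setminus C$ on opposite sides of $\mathbb{R}P^2\setminus C$. I would then construct the link explicitly: take one component to be $C$ itself (which bounds the disk $D$ in $z=0$, hence is $0$-homologous), and the second component to be a cycle passing through $w_0$, $u_1$, $w_1$, $u_2$ using the straight edges to the apex vertices together with paths in $G$. The goal is to show this second cycle, routed so that it crosses the disk $D$ exactly once (pierces it an odd number of times), cannot be separated from $C$ by any embedded $3$-ball; the linking is detected by the algebraic intersection number of the second cycle with the disk $D$ spanned by $C$. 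This is the standard obstruction to splitting, since a nonzero intersection with a Seifert disk of one component forces a nonsplit link.

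For the reverse direction, I would assume $f(G)$ is nonseparating and show every pair of $0$-homologous cycles in $f(G+\overline{K_2})$ is split. The main case analysis is by how the two $0$-homologous cycles $L_1, L_2$ interact with the apex vertices $w_0, w_1$: each cycle either avoids both apexes (lying entirely in $z=0$), or passes through exactly one of them (a $0$-homologous cycle cannot use both $w_0$ and $w_1$ while staying $0$-homologous in a way that links, which I would verify). When a cycle lies in $z=0$, its projection is a $0$-homologous cycle in $f(G)$, and the nonseparating hypothesis says every such cycle fails to separate the remaining vertices — which lets me push the other component off to one side into a ball. When a cycle uses an apex, I can use the fact that $w_0$ lives strictly above $z=0$ and $w_1$ strictly below to slide the two components apart along the $z$-direction.

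I expect the main obstacle to be the reverse direction, specifically handling the case where both $0$-homologous cycles dip through apex vertices and the projections to $z=0$ interleave in a complicated way; ruling out linking there requires carefully using that a $0$-homologous cycle through $w_0$ has its ``shadow'' disk realizable in the upper half-ball, so that the two spanning disks can be taken disjoint precisely when no separating $0$-homologous cycle exists in $f(G)$. The crux is to convert the purely two-dimensional combinatorial condition (nonseparating) into a three-dimensional disjoint-disk statement, and I would do this by showing that a separating configuration is the \emph{only} way to force the spanning disks to intersect essentially.
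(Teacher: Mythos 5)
Your forward direction (separating $\Rightarrow$ nonsplit link) is essentially the paper's own argument: the paper likewise links the separating cycle with the four-cycle $(v_0,w_0,v_1,w_1)$ through both apexes, and your justification of nonsplitness via the odd intersection number with the disk bounded by $C$ is in fact more careful than the paper's bare assertion. The genuine gap is in the reverse direction. Your case analysis declares that every $0$-homologous cycle either misses both apexes or passes through exactly one of them, relegating cycles through both $w_0$ and $w_1$ to an unverified parenthetical. Such cycles exist and are $0$-homologous: since $w_0w_1$ is not an edge, they have the form $(w_0,a_1,\dots,a_n,w_1,b_1,\dots,b_m)$ with two paths of $G$ in $z=0$, and your own forward-direction component $(w_0,u_1,w_1,u_2)$ is exactly one of them and does link --- so the parenthetical claim is false as stated. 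Worse, this omitted configuration (one component through both apexes, the other lying in $z=0$) is the \emph{only} one in which a nonsplit link can occur, hence the only place where the nonseparating hypothesis is actually needed. The paper's entire reverse direction is the analysis of this case: the paths $a_1\cdots a_n$ and $b_1\cdots b_m$ are connected and disjoint from the $z=0$ component $C_2$, so each lies on one side of $C_2$; if they lie on the same side, the apex component can be engulfed in a ball missing $C_2$ and the link is split, while if they lie on opposite sides, then $C_2$ separates vertices of $G$, contradicting nonseparating. Your proposal proves only the peripheral cases and skips this essential one.

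A secondary flaw: ``sliding the two components apart along the $z$-direction'' when each component uses one apex is not legitimate in $\mathbb{R}P^3$ as stated. If the $z=0$ portion of a component crosses the boundary sphere of the defining ball, a uniform vertical push is not even continuous, since the antipodal identification sends a point exiting at height $+\epsilon$ to one re-entering at height $-\epsilon$; for the same reason the ``shadow disk'' (the cone from $w_0$ over the path) fails to be a disk once the path crosses the boundary. This case can be repaired --- for instance, isotope the tent over $w_0$ down to a parallel push-off of its $z=0$ path, so that the component bounds a thin disk disjoint from the other component, then take a ball neighborhood of that disk --- but some argument of this kind is required; the vertical slide by itself fails.
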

\begin{pro}

    Suppose there is a nonsplit link $L$ consisting of two components $C_{1}$, $C_{2}$. Note that if $w_{0}$, $w_{1}$ are not in the same component of $L$, $L$ must be split. Without loss of generality assume that $C_{1}$ has the form $(w_{0}, a_{1}, ... , a_{n}, w_{1}, b_{1}, ... b_{m})$, where $n, m \in \mathbb{Z^{+}}$ and $C_{2}$ lies on $z=0$. Note that all $a_{i}$ lie on the same side of $C_{2}$ on $z=0$, and all $b_{i}$ lie on the same side of $C_{2}$ on $z=0$. \\
    
    If $a_{i}$, $b_{i}$ lie on the same side of $C_{2}$, $L$ is split. If $a_{i}$, $b_{i}$ lie on different sides of $C_{2}$, then $C_{2}$ is a separating cycle in the projective planar embedding of $G$. In either case we have a contradiction. 

    Conversely, suppose $f(G)$ is a separating projective planar embedding, then there exists a cycle $C_{0}$ separating $v_{0}$, $v_{1}$. Consider the link $L$ consisting of $C_{0}$ and $C_{1} = (v_{0},w_{0},v_{1},w_{1})$, then $L$ is a nonsplit link of two 0-homologous cycles, contradicting the assumption.
\end{pro}
\begin{col}
If $G$ is a nonseparating projective planar graph, then $G+\bar{K_{2}}$ has an embedding with no link consisting of two 0-homologous cycles. 
\end{col}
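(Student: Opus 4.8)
The plan is to derive this corollary directly from the forward implication of Theorem~\ref{Theorem1}, applied to a suitably chosen embedding, interpreting ``link'' in the statement as ``nonsplit link'' in keeping with the convention of the rest of the section. First I would unwind the hypothesis using the definition of nonseparating projective planarity: since $G$ is a nonseparating projective planar graph, by definition there exists at least one embedding of $G$ into $\mathbb{R}P^2$ that fails the separating property, that is, a nonseparating projective planar embedding. I fix such an embedding and call it $f(G)$.

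Next I would assemble the candidate embedding of $G + \bar{K_{2}}$ in the standard form used throughout this section: place $f(G)$ in the plane $z=0$, locate $w_0$ above $z=0$ and $w_1$ below $z=0$, and route each edge joining $w_0$ or $w_1$ to a vertex of $G$ so that it avoids the boundary of the ball whose quotient defines $\mathbb{R}P^3$. This yields a legitimate embedding $f(G + \bar{K_{2}})$ of precisely the type to which Theorem~\ref{Theorem1} applies.

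Finally, because $f(G)$ was chosen to be a nonseparating projective planar embedding, the forward direction of Theorem~\ref{Theorem1} gives that $f(G + \bar{K_{2}})$ has no nonsplit link of two 0-homologous cycles. Since this is the embedding just exhibited, $G + \bar{K_{2}}$ admits an embedding with no such link, which is exactly the assertion.

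I do not anticipate a genuine obstacle, as all the substantive work is already carried out in Theorem~\ref{Theorem1}; the only point requiring care is the quantifier shift. Theorem~\ref{Theorem1} is a statement about a single fixed embedding, whereas the corollary asserts the \emph{existence} of a good embedding of the abstract graph $G + \bar{K_{2}}$. The step that legitimizes this transition is the selection made in the first paragraph: nonseparating projective planarity of $G$ is precisely the hypothesis guaranteeing that a witness embedding $f(G)$ of the correct type exists, so that the theorem may be invoked on it.
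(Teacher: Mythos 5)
Your proposal is correct and matches the paper's intent exactly: the paper states this corollary without proof precisely because it follows immediately from Theorem~\ref{Theorem1} in the way you describe — choose a nonseparating embedding $f(G)$ (guaranteed by the definition of a nonseparating projective planar graph), build the standard embedding $f(G+\bar{K_{2}})$, and invoke the forward direction of the theorem. Your attention to the quantifier shift (theorem about a fixed embedding versus corollary asserting existence of an embedding) and to reading ``link'' as ``nonsplit link'' is exactly the right care to take, and nothing further is needed.
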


\subsubsection{Nonsplit links between one 0-homologous cycle and one 1-homologous cycle}

Consider a projective planar embedding $f(G)$ of $G$ and a 0-homologous cycle bounding $f(G') \subset f(G)$ which contains every vertex of $f(G)$ as in Figure \ref{fig:1-homologous}. Then each 1-homologous cycle divides $f(G')$ into two sides. A 1-homologous cycle is \textit{separating} if there is at least one vertex on each of its two sides.

\begin{figure}[H]
    \begin{center}
    \includegraphics[scale=0.57]{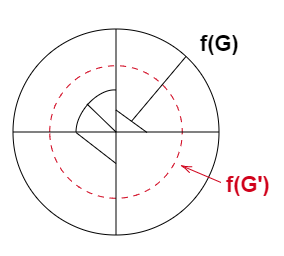}

    \includegraphics[scale=0.46]{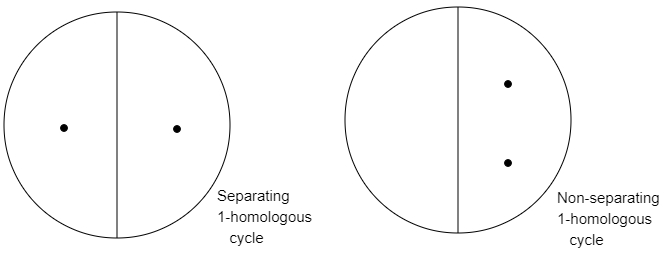}
    \caption{Separating 1-homologous Cycle}
    \label{fig:1-homologous}
\end{center}

\end{figure}

\begin{theo}
\label{Theorem2}
Given that $f(G)$ is nonseparating, $f(G+\bar{K_{2}})$ has no nonsplit link of a 0-homologous cycle and a 1-homologous cycle if and only if $f(G)$ has no separating 1-homologous cycle\\
\end{theo}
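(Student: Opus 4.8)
The plan is to prove both directions by analyzing the possible forms of a nonsplit link in $f(G+\bar{K_2})$ consisting of a $0$-homologous cycle $C_0$ and a $1$-homologous cycle $C_1$, closely mirroring the strategy of Theorem \ref{Theorem1}. The key structural observation I would establish first is that the $1$-homologous component $C_1$ must pass through exactly one of $w_0, w_1$ and cross the boundary an odd number of times, so it uses a $1$-homologous edge of $f(G)$ or an edge from $w_0$ (respectively $w_1$) into $f(G)$. Since the vertex cone edges to $w_0, w_1$ are taken to avoid the boundary and lie entirely above or below $z=0$, they are $0$-homologous; hence the parity of $C_1$ is controlled entirely by the edges it uses inside $f(G)$ on the $z=0$ slice. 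The first step, then, is to reduce to the case where $C_0$ lies on $z=0$ (as a cycle of $f(G)$) and $C_1$ is an essential cycle built from a $1$-homologous path in $f(G)$ together with cone edges through $w_0$ and/or $w_1$.

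For the forward direction (contrapositive: if $f(G)$ has a separating $1$-homologous cycle, then there is a nonsplit link), I would take the separating $1$-homologous cycle $S$ in $f(G)$, which by definition has at least one vertex $u$ on each of its two sides inside the disk $f(G')$. I would then form $C_1$ by routing through $w_0$ or $w_1$ if needed to make an essential cycle, and form the $0$-homologous companion $C_0 = (u, w_0, u', w_1)$ where $u, u'$ are the two separated vertices, exactly as in the corollary construction after Theorem \ref{Theorem1}. The nonsplitness then follows from a linking-number or homological intersection argument: $C_0$ pierces any disk spanning $C_1$ because the two vertices it visits lie on opposite sides of the separating cycle. This is the analogue of the "different sides" case in the proof of Theorem \ref{Theorem1}.

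For the converse, assume $f(G)$ has no separating $1$-homologous cycle and suppose for contradiction that a nonsplit link $L = C_0 \cup C_1$ exists. As in Theorem \ref{Theorem1}, if $w_0$ and $w_1$ lie in different components the link splits, so I would case on which component contains the cone vertices and then localize $C_1$'s essential crossing to a single $1$-homologous subpath lying in $f(G)$. Projecting to $z=0$, this subpath (closed up via the disk-bounding structure of $f(G')$) behaves like a $1$-homologous cycle in $f(G)$; by hypothesis it is nonseparating, so all vertices of $f(G)$ reached by $C_0$ through the $w_0, w_1$ cone lie on a single side of it, which lets me exhibit an explicit splitting $3$-ball separating $C_0$ from $C_1$.

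\textbf{Main obstacle.} I expect the hard part to be the converse's reduction step: carefully making rigorous that a general $1$-homologous cycle $C_1$ in $\mathbb{R}P^3$ — which may weave through both $w_0$ and $w_1$ and cross $z=0$ several times — can be replaced, without affecting splittability, by one whose essential behavior is captured by a single $1$-homologous path inside the planar slice $f(G)$. The cone edges are $0$-homologous, so parity bookkeeping forces an odd number of $1$-homologous $f(G)$-edges in $C_1$, but translating "odd number of essential edges" into a clean "separating versus nonseparating planar cycle" dichotomy, and then building the splitting ball explicitly, is where the genuine care is needed; the rest is parallel to Theorem \ref{Theorem1}.
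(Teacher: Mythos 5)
Your forward direction is essentially the paper's own construction: take $C_{1}=S$, the separating 1-homologous cycle itself (no ``routing through $w_{0}$ or $w_{1}$'' is needed, or even possible, since $C_{0}$ already uses both cone vertices), and $C_{0}=(u,w_{0},u',w_{1})$; that part is fine, apart from the slip that a 1-homologous cycle bounds no disk, so ``$C_{0}$ pierces any disk spanning $C_{1}$'' should instead be an argument that $C_{1}$ meets every 2-chain bounded by the 0-homologous cycle $C_{0}$ an odd number of times. The genuine gap is your announced ``key structural observation,'' which is false and which your own construction contradicts: in a nonsplit link of $f(G+\bar{K_{2}})$ the vertices $w_{0},w_{1}$ must lie in the \emph{same} component (as you yourself state in the converse, a link with them in different components, or with one of them unused, is split), so the 1-homologous component never passes through exactly one of $w_{0},w_{1}$ --- indeed, in your forward construction $C_{1}$ contains neither. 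Consequently your reduction for the converse, namely ``$C_{0}$ lies on $z=0$ and $C_{1}$ is an essential cycle built from a 1-homologous path in $f(G)$ together with cone edges,'' is exactly backwards. The correct case split, which is how the paper argues, is: (i) both $w_{0},w_{1}$ lie in the 0-homologous component $C_{0}$, so the essential component $C_{1}$ is a genuine cycle of the graph $f(G)$ lying in $z=0$, the two planar arcs of $C_{0}$ lie on the two sides of $C_{1}$, and nonsplitness forces them onto different sides, making $C_{1}$ a separating 1-homologous cycle; (ii) both $w_{0},w_{1}$ lie in the 1-homologous component $C_{1}$, so $C_{0}$ is a cycle of $f(G)$, and the same side argument forces $C_{0}$ to be a separating 0-homologous cycle, which is excluded by the standing hypothesis that $f(G)$ is a nonseparating embedding --- not by the hypothesis about 1-homologous cycles.

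This also dissolves what you identify as the main obstacle: there is no need to replace a $C_{1}$ that weaves through the cone vertices by a planar one. In case (ii) your plan to project $C_{1}$ to $z=0$ and ``close it up'' fails in principle, because the resulting curve is not a cycle of the graph $f(G)$, so the hypothesis ``$f(G)$ has no separating 1-homologous cycle'' says nothing about it; the contradiction in that case must come from the nonseparating-embedding hypothesis applied to the graph cycle $C_{0}$. Once the cases are organized by which single component contains \emph{both} cone vertices, each case is a short ``same side versus different sides'' argument exactly as in Theorem \ref{Theorem1}, and the theorem follows with no localization or surgery on $C_{1}$ at all.
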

\begin{pro}
The two lemmas below complete the proof.
\end{pro}

\begin{lem}
Suppose $f(G+\bar{K_{2}})$ has a nonsplit link of a 0-homologous cycle and a 1-homologous cycle, then one of the following two cases occurs
\begin{enumerate}
    \item $f(G)$ has a separating 1-homologous cycle
    \item $f(G)$ is a separating projective planar embedding
\end{enumerate}
\end{lem}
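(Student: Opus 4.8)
The plan is to mirror the splitting argument in the proof of Theorem~\ref{Theorem1}, but now keeping track of the homology type of each component. Write the nonsplit link as $L_1\cup L_2$ with $L_1$ the $0$-homologous component and $L_2$ the $1$-homologous component. First I would reduce to the case that exactly one component, call it $C_s$, passes through both $w_0$ and $w_1$, while the other, $C_p$, lies entirely in the plane $z=0$ and is therefore a cycle of $f(G)$. Indeed, if both components lay in $z=0$, then the $0$-homologous one would bound a disk in the $\mathbb{R}P^2$ at $z=0$ that is disjoint from the essential ($1$-homologous) one, so the link would split; and if $w_0$ and $w_1$ lay in different components, then, exactly as asserted in the proof of Theorem~\ref{Theorem1}, the link splits. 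Since $w_0w_1$ is not an edge of $G+\overline{K_2}$, the spatial cycle has the form $C_s=(w_0,a_1,\dots,a_n,w_1,b_1,\dots,b_m)$ with $n,m\ge 1$, and its in-plane portions $P_a=a_1\cdots a_n$ and $P_b=b_1\cdots b_m$ are paths of $f(G)$ that are disjoint from $C_p$ and lie in $z=0$.

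The argument then splits on the homology type of the planar component $C_p$. If $C_p$ is $0$-homologous, it is a null cycle and so separates the $\mathbb{R}P^2$ at $z=0$ into a disk side and a M\"obius-band side, and each of the disjoint paths $P_a,P_b$ lies on a single side. If they lie on opposite sides, then the vertices $a_1$ and $b_1$ of $G\setminus C_p$ lie in different components of $\mathbb{R}P^2\setminus C_p$, so $C_p$ is a $0$-homologous separating cycle and $f(G)$ is a separating projective planar embedding, which is conclusion (2). If instead they lie on the same side, I would exhibit a disk bounded by $C_p$ disjoint from $C_s$ and so split the link, contradicting our hypothesis: when $P_a,P_b$ lie on the M\"obius side, the disk that $C_p$ bounds in $z=0$ is already disjoint from $C_s$ (whose excursions leave the plane only in $z>0$ and $z<0$); when they lie on the disk side, one isotopes the excursions of $C_s$ toward $w_0$ and $w_1$ into the solid cylinder over that disk, again producing a splitting $3$-ball.

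If instead $C_p$ is $1$-homologous, it is essential and does not separate $\mathbb{R}P^2$; however, relative to a disk $f(G')\subset f(G)$ containing every vertex of $f(G)$ it divides the vertices into two parts, and each of $P_a,P_b$ lies in one part. If they lie in different parts, then $C_p$ has a vertex of $G$ on each of its two sides and is therefore a separating $1$-homologous cycle, which is conclusion (1); so it remains to show that if $P_a,P_b$ lie in the same part the link splits. I expect this last splitting argument to be the main obstacle: because the complement of an essential curve in $\mathbb{R}P^2$ is connected, the two ``sides'' are only defined relative to the vertex disk $f(G')$, and the splitting $3$-ball must be constructed carefully around the antipodal identification on the boundary of $\mathbb{R}P^3$ while still capturing the excursions of $C_s$ to $w_0$ and $w_1$. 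Concretely, I would cut $\mathbb{R}P^3$ along $f(G')$, use that the complement of $C_p$ in $\mathbb{R}P^2$ is a disk together with the fact that $C_s$ meets $z=0$ only along $P_a\cup P_b$, and isotope $C_s$ into a ball neighborhood of the common part containing $P_a$, $P_b$ and the upper and lower excursions, thereby separating it from $C_p$ and completing the proof.
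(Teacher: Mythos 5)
Your proposal is correct and follows essentially the same route as the paper's proof: reduce to the case where $w_0$ and $w_1$ lie together in one (spatial) component so that the other component is a cycle of $f(G)$ in $z=0$, then case on the homology class of that planar component, concluding that the link splits when the two planar paths of the spatial cycle lie on the same side of it, and obtaining conclusion (1) or (2) respectively when they lie on opposite sides. The additional detail you give for constructing the splitting $3$-balls (including the step you flag as the main obstacle, for a $1$-homologous planar component) is simply asserted without proof in the paper, so your write-up is, if anything, more careful than the original.
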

\begin{pro}
Suppose there is a nonsplit link $L$ consisting of two components $C_{0}$, $C_{1}$, where $C_{0}$ is 0-homologous and $C_{1}$ is 1-homologous. Note that if $w_{0}$, $w_{1}$ are not in the same component of $L$, then $L$ must be split. Then one of the following two cases occurs
\begin{enumerate}
    \item $w_{0}$, $w_{1}$ in $C_{0}$\\
    
    Without loss of generality assume that $C_{0}$ has the form $(w_{0}, a_{1}, ... , a_{n}, w_{1},$ $b_{1}, ... b_{m})$, where $n, m \in \mathbb{Z^{+}}$. Note that all $a_{i}$ lie on the same side of $C_{1}$ on $z=0$, and all $b_{i}$ lie on the same side of $C_{1}$ on $z=0$. If $a_{i}$, $b_{i}$ lie on the same side of $C_{1}$, $L$ is split. If $a_{i}$, $b_{i}$ lie on different sides of $C_{1}$, then $C_{1}$ is a separating 1-homologous cycle.\\
    
    \item $w_{0}$, $w_{1}$ in $C_{1}$\\
    
    Without loss of generality assume that $C_{1}$ has the form $(w_{0}, a_{1}, ... , a_{n}, w_{1},$ $b_{1}, ... b_{m})$, where $n, m \in \mathbb{Z^{+}}$. Note that all $a_{i}$ lie on the same side of $C_{1}$ on $z=0$, and all $b_{i}$ lie on the same side of $C_{1}$ on $z=0$. If $a_{i}$, $b_{i}$ lie on the same side of $C_{1}$, $L$ is split. If $a_{i}$, $b_{i}$ lie on different sides of $C_{1}$, then $C_{0}$ is a separating 0-homologous cycle, and $f(G)$ is a separating projective planar embedding.\\
\end{enumerate}
\end{pro}

\begin{lem}
Suppose $f(G)$ has a separating 1-homologous cycle, then $f(G+\bar{K_{2}})$ has a nonsplit link of a 0-homologous cycle and a 1-homologous cycle.
\end{lem}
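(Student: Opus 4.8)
The plan is to use the given separating $1$-homologous cycle $C$ itself as the $1$-homologous component of the link, and to build a $0$-homologous companion from the two apex vertices $w_0, w_1$. By the separating hypothesis, $C$ divides $f(G')$ into two sides, each containing at least one vertex; fix a vertex $u$ on one side and a vertex $v$ on the other, both chosen off $C$. Since $w_0$ sits above $z=0$, $w_1$ below, and every edge from $w_0, w_1$ to $G$ is straight and avoids the boundary, the $4$-cycle $C_0 = (u, w_0, v, w_1)$ crosses the boundary zero times and is therefore $0$-homologous. General position makes $C_0$ disjoint from $C$, since the edges $u w_0, w_0 v, v w_1, w_1 u$ leave $z=0$ immediately while $C \subset z=0$ and $u, v \notin C$.

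Next I would produce an explicit spanning disk for $C_0$ and read off its intersection with $C$. Take $D = T_+ \cup T_-$, where $T_+$ is the triangle $u w_0 v$ (in the upper half-space) and $T_-$ is the triangle $u w_1 v$ (in the lower half-space); the two triangles meet exactly along an arc $\gamma$ from $u$ to $v$ chosen to lie in $f(G') \subset \{z=0\}$, so $D$ is an embedded disk with $\partial D = C_0$ and $D \cap \{z=0\} = \gamma$. Because $C \subset \{z = 0\}$, every point of $C \cap D$ lies on $\gamma$, and each such crossing is transverse, as the tangent plane of $D$ along $\gamma$ is vertical while $C$ runs across $\gamma$ horizontally. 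Since $u$ and $v$ lie on opposite sides of the separating cycle $C$, the arc $\gamma$ crosses $C$ an odd number of times, so $|C \cap D|$ is odd.

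To convert this into nonsplitness I would invoke a linking number rather than a mod-$2$ count, and here lies the main obstacle. Because $H_2(\mathbb{R}P^3;\mathbb{Z}/2) = \mathbb{Z}/2$ and $C$ meets the generator $[\mathbb{R}P^2]$ oddly, the mod-$2$ linking of a $0$-homologous and a $1$-homologous cycle is \emph{not} well-defined, so the naive ``odd intersection'' statement is not yet an invariant. The fix is to work over $\mathbb{Z}$: since $C_0$ is integrally null-homologous and $H_2(\mathbb{R}P^3;\mathbb{Z}) = 0$, any two $2$-chains bounding $C_0$ differ by a boundary, whence $\mathrm{lk}(C_0, C) := C \cdot D \in \mathbb{Z}$ is independent of the chosen chain. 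Computing with the disk above, $\mathrm{lk}(C_0, C) = \sum_{p \in C \cap \gamma} \pm 1$ is a signed sum of an \emph{odd} number of terms, hence is odd and in particular nonzero.

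Finally I would deduce nonsplitness from $\mathrm{lk}(C_0, C) \neq 0$. If the link were split, some ball $A$ would contain one component with the other in $A^c$; as $C$ is $1$-homologous it cannot lie in the simply connected ball $A$, so necessarily $C_0 \subset A$ and $C \subset A^c$. Then $C_0$ bounds a $2$-chain $D_A \subset A$, and by independence of the chain $\mathrm{lk}(C_0, C) = C \cdot D_A = 0$ because $C \subset A^c$ is disjoint from $D_A$, contradicting $\mathrm{lk}(C_0,C) \neq 0$. Hence $f(G + \bar{K_2})$ contains the nonsplit link $C_0 \cup C$ of a $0$-homologous and a $1$-homologous cycle. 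In the full write-up the delicate points are the well-definedness over $\mathbb{Z}$ (i.e. $H_2(\mathbb{R}P^3;\mathbb{Z})=0$) and the combinatorial fact that an odd signed count is automatically nonzero; the constructions of $C_0$ and $D$ are routine.
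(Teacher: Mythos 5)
Your proposal is correct and takes essentially the same approach as the paper: both build the link from the given separating $1$-homologous cycle $C$ together with the $0$-homologous $4$-cycle $(u,w_{0},v,w_{1})$ through vertices $u,v$ lying on opposite sides of $C$. The only difference is that the paper simply asserts this link is nonsplit, while you supply the supporting detail (the coned spanning disk, the odd crossing count, and the integral linking-number argument made necessary by $H_2(\mathbb{R}P^3;\mathbb{Z}/2)\neq 0$), which fills in rather than replaces the paper's argument.
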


\begin{pro}
Consider a 1-homologous cycle $C_{1}$ separating $v_{0}$, $v_{1}$. Consider the link $L$ consisting of $C_{1}$ and $C_{0} = (v_{0}, w_{0}, v_{1}, w_{1})$, then $L$ is a nonsplit link of the 0-homologous cycle $C_{0}$ and the 1-homologous cycle $C_{1}$.
\end{pro}

\subsubsection{Nonsplit links between two 1-homologous cycles}

\begin{theo}
\label{Theorem3}
The embedding $f(G+\bar{K_{2}})$ has no nonsplit link of two 1-homologous cycles if and only if every pair of 1-homologous edges share a common vertex in $f(G)$, if and only if exactly one of the two following occurs
\begin{enumerate}
    \item Every 1-homologous edge shares a common vertex $v_{0}$ in $f(G)$.
    \item The embedding $f(G)$ has exactly three 1-homologous edges intersecting pairwise.
\end{enumerate}
\end{theo}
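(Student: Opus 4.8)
The plan is to collapse the entire topological question onto a single homological observation and then reduce everything to a combinatorial statement about the set $S$ of $1$-homologous edges of $f(G)$. Recall from the standing assumptions that every spoke joining $w_0$ or $w_1$ to $G$ is straight and misses the boundary, hence is $0$-homologous; so every $1$-homologous edge is an edge of $G$ in $z=0$, and a cycle is $1$-homologous exactly when it contains an odd (in particular positive) number of such edges. The crucial point is that \emph{two disjoint $1$-homologous cycles in $\mathbb{R}P^3$ can never be split}: if a component $C$ lay inside a $3$-ball $A$ with the other component in $A^{c}$, then $C$ would be null-homotopic in $A$, hence null-homologous in $\mathbb{R}P^3$, i.e.\ $0$-homologous, contradicting that $C$ is essential. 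Consequently $f(G+\bar{K_2})$ admits a nonsplit link of two $1$-homologous cycles if and only if it admits two disjoint $1$-homologous cycles at all.

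Next I would show that the existence of two disjoint $1$-homologous cycles is equivalent to the existence of two vertex-disjoint $1$-homologous edges. Given disjoint $1$-homologous cycles $C_1,C_2$, each contains at least one $1$-homologous edge; choosing one from each yields two edges that are automatically vertex-disjoint, since $C_1$ and $C_2$ share no vertices. Conversely, given vertex-disjoint $1$-homologous edges $e=\{a,b\}$ and $e'=\{c,d\}$, I would cap them off into the triangles $C_1=a\,b\,w_0$ and $C_2=c\,d\,w_1$ using the $0$-homologous spokes to $w_0$ and $w_1$. Each triangle contains exactly one $1$-homologous edge and so is $1$-homologous, and because $C_1$ lies over $\{a,b,w_0\}$ in the region $z\ge 0$ while $C_2$ lies over $\{c,d,w_1\}$ in $z\le 0$, the two are genuinely disjoint (their edges can only meet in $z=0$, where $e$ and $e'$ are disjoint embedded edges of $f(G)$). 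Combined with the previous paragraph, this gives the first biconditional: $f(G+\bar{K_2})$ has no nonsplit link of two $1$-homologous cycles if and only if no two $1$-homologous edges are vertex-disjoint, i.e.\ if and only if every pair of $1$-homologous edges shares a vertex.

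For the second biconditional I would invoke the classical structure lemma for intersecting families of edges: if the edges of $S$ pairwise intersect, then either they all pass through a single common vertex $v_0$, or $S$ consists of exactly three edges forming a triangle. Indeed, if $S$ is not a star, pick $e_1,e_2$ meeting at a vertex $v$ and an edge $e_3\in S$ missing $v$; since $e_3$ must meet both $e_1$ and $e_2$, it contains their other endpoints, producing a triangle, and one checks that any further edge of $S$ is forced to coincide with $e_1$, $e_2$, or $e_3$. The two outcomes are mutually exclusive, since a genuine triangle has no common vertex; interpreting Condition (2) as the triangle case therefore yields exactly ``one of (1),(2),'' completing the chain of equivalences.

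Regarding the main obstacle: the step one would naively expect to be hard is verifying that the capped configuration is \emph{nonsplit} rather than merely disjoint, and this is precisely the topological heart of the theorem. My plan disposes of it in one line through the ball/homology argument of the first paragraph, after which only routine bookkeeping remains—namely checking that the two capped triangles are disjoint and embedded, and verifying the star-or-triangle dichotomy together with its mutual exclusivity. Thus the real care lies not in any linking computation but in stating the combinatorial lemma and the disjointness of the construction correctly.
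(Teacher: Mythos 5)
Your proposal is correct and follows essentially the same route as the paper's proof: the same capping construction (triangles through $w_0$ and $w_1$) to produce a link from two vertex-disjoint $1$-homologous edges, the same observation that pairwise-intersecting $1$-homologous edges force any two $1$-homologous cycles to meet, and the same star-versus-triangle dichotomy for the second biconditional. The only difference is that you explicitly justify nonsplitness---an essential cycle cannot lie in a $3$-ball---a point the paper leaves implicit when it asserts that the two constructed triangles ``form the desired link.''
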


\begin{figure}[H]
    \begin{center}
    \includegraphics[scale=0.5]{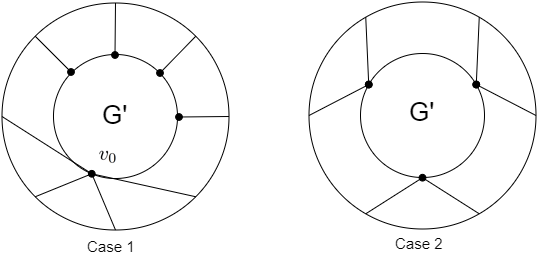}
    \caption{Case 1 and Case 2, where $f(G')$ is planar}
\end{center}

\end{figure}

\begin{proof}
We separately prove each statement of the theorem.\\

\begin{itemize}
    \item Every pair of 1-homologous edges share a common vertex implies no link of two 1-homologous cycles.
\end{itemize}

Every 1-homologous cycle contains a 1-homologous edge in $f(G)$. Therefore every two 1-homologous cycle intersects, and there is no link of two 1-homologous cycles. 

\begin{itemize}
    \item A pair of 1-homologous edges does not share a common vertex implies a link of two 1-homologous cycles exists.
\end{itemize}

Consider the edges between $v_{0}$, $v_{1}$ and $u_{0}$, $u_{1}$ to be the pair of 1-homologous edges. Then the cycles $C_{0}$ consisting of $(v_{0}, v_{1})$, $(v_{1}, w_{0})$, $(w_{0}, v_{0})$ and $C_{1}$ consisting of $(u_{0}, u_{1})$, $(u_{1}, w_{1})$, $(w_{1}, u_{0})$ form the desired link.

\begin{itemize}
    \item Case 1 or Case 2 implies every pair of 1-homologous edges share a common vertex in $f(G)$.
\end{itemize}

Note that in either case, every pair of 1-homologous edges share a common vertex in $f(G)$. 

\begin{itemize}
    \item  Every pair of 1-homologous edges share a common vertex in $f(G)$ implies Case 1 or Case 2.
\end{itemize}

Suppose not every 1-homologous edge passes through a common vertex, then there exists three 1-homologous edges that do not pass through a common vertex but intersect pairwise at $v_{0}$, $v_{1}$, $v_{2}$. Suppose there is a fourth 1-homologous edge, then one of the following three cases must occur

\begin{enumerate}
    \item This edge connects two of $v_{0}$, $v_{1}$, $v_{2}$.\\
    
    Without loss of generality suppose it connects $v_{0}$, $v_{1}$. Then there are two 1-homologous edges between $v_{0}$, $v_{1}$, contradicting multiplicity.
    \item This edge connects one of $v_{0}$, $v_{1}$, $v_{2}$ to a new vertex $u_{0}$.\\
    
    Without loss of generality suppose it connects $v_{0}$, $u_{0}$. Then the pair of 1-homologous edges of $(v_{0}, u_{0})$ and $(v_{1}, v_{2})$ does not share a common vertex, contradicting the assumption.
    \item This edge connects two new vertices $u_{0}$, $u_{1}$.\\
    
    The pair of 1-homologous edges of $(v_{0}, v_{1})$ and $(u_{0}, u_{1})$ does not share a common vertex, contradicting the assumption.
\end{enumerate}
Therefore there cannot be a fourth 1-homologous edge, and Case 2 must occur.

\end{proof}

\begin{col}
If $f(G+\bar{K_{2}})$ has no nonsplit link of two 1-homologous cycles, then $G$ is a planar graph.
\end{col}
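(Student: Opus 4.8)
The plan is to use Theorem \ref{Theorem3} to reduce to a short list of configurations and then, in each, isotope the crosscap off the graph entirely. By Theorem \ref{Theorem3}, the hypothesis that $f(G+\bar{K_2})$ carries no nonsplit link of two $1$-homologous cycles is equivalent to exactly one of two situations: (Case 1) every $1$-homologous edge of $f(G)$ passes through a common vertex $v_0$, or (Case 2) $f(G)$ has exactly three $1$-homologous edges meeting pairwise at vertices $v_0,v_1,v_2$. In either case all but a very restricted set of edges of $f(G)$ are $0$-homologous. The guiding principle is the standard fact that $\mathbb{R}P^2$ cut along an essential (one-sided) simple closed curve is an open disk: hence if $f(G)$ can be ambient-isotoped to be disjoint from some essential simple closed curve $\alpha$, then $f(G)\subset \mathbb{R}P^2\setminus\alpha$ is a planar embedding and $G$ is planar. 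Equivalently, it suffices to produce an embedding of $G$ in $\mathbb{R}P^2$ having a non-cellular (Möbius) face, since the core of such a face is the desired essential curve.

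First I would record, exactly as in the accompanying figure showing that $f(G')$ is planar, that the subgraph $f(G')$ bounded by a $0$-homologous cycle carrying all vertices is planar, so only the $1$-homologous edges can obstruct planarity. In Case 2 the three $1$-homologous edges form a single essential cycle $T=v_0v_1v_2$, while every other edge lies in the disk $f(G')$. I would isotope these three edges so that they run as a parallel bundle through a small arc of the crosscap; a face lying immediately to one side of the bundle then meets the boundary circle of the disk model in a pair of antipodal arcs, so it is a Möbius face and contains an essential curve $\alpha$ missing $f(G)$. In Case 1 the same bundling works with the single hub $v_0$: since all $1$-homologous edges emanate from $v_0$, I would route them out of $v_0$ as a parallel bundle through the crosscap, after which the face adjacent to the bundle on the $v_0$ side is again a Möbius face. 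In short, a crosscap that meets $G$ only through edges incident to one vertex, or only through the three edges of one cycle, should be removable, whence $G$ is planar.

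The main obstacle is precisely the verification that this bundling isotopy can be carried out without dragging the bundle across the $0$-homologous part of $G$, i.e.\ that the resulting face is genuinely a Möbius face disjoint from the rest of $f(G)$. This is clear locally near the crosscap but needs care globally, because the far ends of the $1$-homologous edges fan out to distinct vertices sitting among $f(G')$. I would handle it by taking a regular neighborhood of $v_0$ together with all $1$-homologous edges in Case 1 (a star, hence a tree, so its neighborhood is a disk), and of the cycle $T$ in Case 2, and then checking that the complement of this neighborhood meets the crosscap in a single Möbius band whose core is the sought curve $\alpha$. The hypothesis that every pair of $1$-homologous edges shares a vertex is exactly what rules out a second, independent essential cycle that could block this neighborhood; this is the content that the two cases of Theorem \ref{Theorem3} supply. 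Once an essential curve disjoint from $f(G)$ is produced, planarity is immediate and the corollary follows.
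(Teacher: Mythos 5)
Your reduction via Theorem \ref{Theorem3} and your observation that only the 1-homologous edges obstruct planarity both match the paper, but the core topological step of your argument is wrong, and not fixable as stated. You propose to \emph{isotope} the 1-homologous edges into a bundle and then find a M\"obius face, equivalently an essential simple closed curve $\alpha$ disjoint from $f(G)$. No such $\alpha$ can exist in Case 2: the three 1-homologous edges form a cycle $T$ crossing the boundary an odd number of times, so $T$ is itself 1-homologous, and any essential simple closed curve in $\mathbb{R}P^2$ has mod-2 intersection number $1$ with $T$ and therefore meets $f(G)$. Ambient isotopy preserves homology classes, so no isotopy helps. Equivalently, $\mathbb{R}P^2$ cut along the essential circle $T$ is an open disk, so \emph{every} face of $f(G)$ lies in a disk and none is a M\"obius band. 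Your local criterion --- ``the face meets the boundary circle in a pair of antipodal arcs, hence is a M\"obius face'' --- is a non sequitur: the complement of a single essential circle meets the boundary circle in antipodal arcs yet is a disk, by the very cutting fact you cite. The same failure can occur in Case 1: if $v_{0}$ also has a 0-homologous edge $(v_{0},w)$ and there is a path from $u_{1}$ to $w$ avoiding $v_{0}$, the cycle through $(v_{0},u_{1})$, that path, and $(w,v_{0})$ contains exactly one 1-homologous edge and is essential, again blocking any essential curve disjoint from $f(G)$. Your closing paragraph worries about a ``second, independent essential cycle,'' but a single essential cycle already kills the approach, and Case 2 always contains one.

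What is needed --- and what the paper does --- is a \emph{re-embedding} of the abstract graph $G$, not an isotopy of $f(G)$. In Case 1 the paper deletes the set $V_{1}$ of 1-homologous edges, notes that $f(G\setminus V_{1})$ can be taken affine and that $v_{0}$ and all the $u_{i}$ lie on the boundary of the face $F$ containing the boundary circle (the deleted edge-arcs witness this access), and then inductively re-draws each edge $(v_{0},u_{i})$ inside $F$ without crossing the boundary; the result is an affine, hence planar, embedding. In Case 2 it likewise exhibits a planar re-embedding in which the triangle is drawn inside the face to which $v_{0},v_{1},v_{2}$ all have access. Your proposal never redraws the 1-homologous edges away from the crosscap --- your bundle still runs through it --- so the edges remain essential and the M\"obius face you need cannot appear. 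To repair the argument you would have to replace the isotopy step with exactly this kind of edge re-routing.
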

\begin{proof}
Consider Case 1 and Case 2 separately
\begin{itemize}
    \item Case 1: Every 1-homologous edge shares a common vertex $v_{0}$ in $f(G)$
    
\begin{figure}[H]
    \begin{center}
    \includegraphics[scale=0.5]{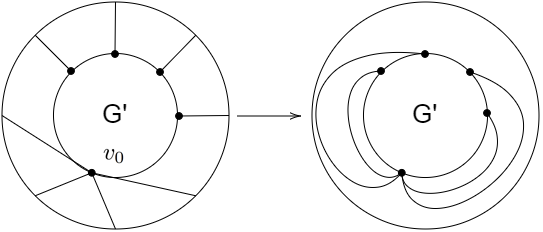}
    \caption{$G$ is planar}
    \label{fig:G-planar}
\end{center}

\end{figure}

    Suppose every 1-homologous edge is in the form $(v_{0},u_{i})$, $i \in \mathbb{Z}^{+}$ and let the set of 1-homologous edges be $V_{1}$. We claim $f(G)$ has an alternative embedding $f'(G)$ where $f(G \setminus V_{1})$ is unchanged and each edge incident to $v_{0}$ does not cross the boundary. Construct $f'(G)$ by inductively adding edges incident to $v_{0}$ not crossing the boundary to $f(G \setminus V_{1})$ as in Figure \ref{fig:G-planar}.\\
    
    Consider an affine embedding of $f(G \setminus V_{1})$ and let $F$ be the face containing the boundary of projective space. Note that $v_{0}$, $u_{i}$ all lie on the boundary of $F$. Since $v_{0}$, $u_{1}$ both lie on the affine boundary of a common face $F$, an edge between $v_{0}$, $u_{1}$ not crossing the boundary can be added. Either $(v_{0}, u_{1})$ divides $F$ into two faces, each having an affine boundary and having $v_{0}$ lying on its boundary, or $(v_{0}, u_{1})$ does not divide $F$ into two faces, with $F$ having an affine boundary and having $v_{0}$ lying on its boundary. Inductively note that each pair $v_{0}$, $u_{i}$ lie on the affine boundary of a common face, therefore an edge connecting $v_{0}$, $u_{i}$ not crossing the boundary can be added. The new edge divides the common face into two faces, each having an an affine boundary and having $v_{0}$ lying on its boundary.\\
    
    The resulting embedding is planar.
    
    \item Case 2: The embedding $f(G)$ has exactly three 1-homologous edges intersecting pairwise\\
    
    Consider the alternative planar embedding of $G$ illustrated below. 
    
\begin{figure}[H]
    \begin{center}
    \includegraphics[scale=0.5]{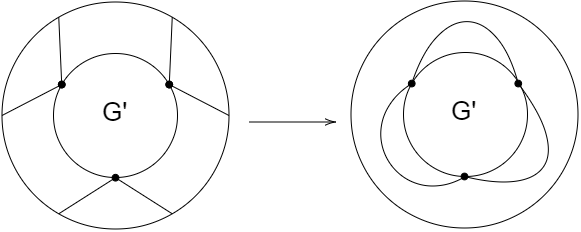}
    \caption{$G$ is planar}
\end{center}
\end{figure}
\end{itemize}
\end{proof}

By considering Theorem \ref{Theorem1}, Theorem \ref{Theorem2}, Theorem \ref{Theorem3}, we attempt to deduce all possible forms of $f(G)$ such that $f(G+\bar{K_{2}})$ has no nonsplit link. 
\subsubsection{Case 1}

Assume $f(G)$ has the embedding as in Case 1, then $f(G+\bar{K_{2}})$ has no link between two 1-homologous cycles. It suffices to look at links between two 0-homologous cycles and between one 0-homologous and one 1-homologous cycle.\\

\begin{theo}
Assume $f(G)$ has the embedding as in Case 1. Let $V_{0}$ be the set of 0-homologous edges connecting to $v_{0}$, and $V_{1}$ be the set of 1-homologous edges connecting to $v_{0}$. The embedding $f(G+\bar{K_{2}})$ has no nonsplit link of two 0-homologous cycles if and only if $f(G \setminus V_{0})$ and $f(G  \setminus V_{1})$ are equivalent to nonseparating planar embeddings.
\end{theo}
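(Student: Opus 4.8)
The plan is to first invoke Theorem~\ref{Theorem1} to translate the link condition into a statement purely about separating cycles: $f(G+\bar{K_{2}})$ has no nonsplit link of two $0$-homologous cycles if and only if $f(G)$ is a nonseparating projective planar embedding. Thus it suffices to prove that, under the Case~1 hypothesis, $f(G)$ is nonseparating if and only if both $f(G\setminus V_0)$ and $f(G\setminus V_1)$ are equivalent to nonseparating planar embeddings.

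Before comparing separating cycles, I would record the two relevant geometric equivalences. Since in Case~1 every $1$-homologous edge of $f(G)$ is incident to $v_0$, the subgraph $G\setminus V_1$ contains no $1$-homologous edge, so $f(G\setminus V_1)$ is isotopic to an affine, hence planar, embedding. In $G\setminus V_0$ the vertex $v_0$ is incident only to $1$-homologous edges; sliding $v_0$ through the crosscap to its antipodal position is an ambient isotopy of $\mathbb{R}P^2$ that turns each such edge into a $0$-homologous edge while leaving the rest of the drawing fixed, producing a planar embedding equivalent to $f(G\setminus V_0)$. Both isotopies preserve the homology class of every cycle and the partition of $\mathbb{R}P^2$ determined by a $0$-homologous cycle, so they preserve the property of being a (non)separating embedding.

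For the forward implication I would argue by contrapositive. Assume $f(G)$ has a separating $0$-homologous cycle $C$ witnessing that vertices $a,b$ of $G\setminus C$ lie on opposite sides. Because the homology class of $C$ equals the parity of the number of $1$-homologous edges it uses, and all such edges meet at $v_0$ (so at most two lie on $C$), the cycle $C$ uses either zero or exactly two $1$-homologous edges. In the first case $C$ is a cycle of $G\setminus V_1$ and still separates $a,b$ (vertex positions and $C$ are unchanged), so $f(G\setminus V_1)$ is separating; in the second case both $1$-homologous edges of $C$ are incident to $v_0$, so $C$ avoids $V_0$, lies in $G\setminus V_0$, and, being preserved by the crosscap slide, still separates $a,b$ there. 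Either way one of the two subgraph embeddings is separating. Conversely, a separating $0$-homologous cycle of the restriction $f(G\setminus V_0)$ (equivalently of its planar model) or of $f(G\setminus V_1)$ is a cycle of $G$ on the same vertex set, with the same homology class and the same inside/outside sides; since passing to $f(G)$ only adds edges and does not move any vertex, the same cycle witnesses that $f(G)$ is separating.

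The main obstacle I anticipate is the careful justification of the crosscap-slide equivalence for $f(G\setminus V_0)$: one must verify that dragging $v_0$ across the boundary of $\mathbb{R}P^2$ is a genuine ambient isotopy, that it converts precisely the $1$-homologous edges at $v_0$ into $0$-homologous edges (crucially using that Case~1 leaves no other $1$-homologous edges to interfere), and that the induced correspondence carries $0$-homologous separating cycles to $0$-homologous separating cycles in both directions. The remaining parity bookkeeping and the restriction/extension arguments are then routine once this equivalence and Theorem~\ref{Theorem1} are in place.
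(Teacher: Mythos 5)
Your proposal is correct and takes essentially the same route as the paper's proof: reduce via Theorem~\ref{Theorem1} to the existence of a separating cycle in $f(G)$, note that $f(G\setminus V_{1})$ is affine and $f(G\setminus V_{0})$ can be isotoped to an affine (hence planar) embedding, and split on whether a separating cycle of $f(G)$ contains zero or exactly two 1-homologous edges (all of which meet at $v_{0}$), with the reverse direction following because deleting edges moves no vertices. Your write-up in fact supplies two details the paper leaves implicit -- the parity argument forcing zero or two 1-homologous edges on the cycle, and the justification of the crosscap-slide isotopy for $f(G\setminus V_{0})$.
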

\begin{pro}

Note that $f(G \setminus V_{1})$ is an affine embedding, and $f(G  \setminus V_{0})$ can be isotoped to an affine embedding. Suppose $f(G \setminus V_{0})$ or $f(G \setminus V_{1})$ has a separating cycle, then $f(G)$ has a separating cycle. By Theorem \ref{Theorem1}, $f(G+\bar{K_{2}})$ has a nonsplit link of two 0-homologous cycles.\\

Conversely, assume $f(G \setminus V_{0})$ and $f(G  \setminus V_{1})$ are equivalent to nonseparating planar embeddings. Suppose $f(G+\bar{K_{2}})$ has a nonsplit link of two 0-homologous cycles, then $f(G)$ is a separating projective planar embedding and has a separating cycle $C$. Note that $C$ either has no 1-homologous edge or has two 1-homologous edges. If $C$ has no 1-homologous edge, then $f(G \setminus V_{1})$ is a separating planar embedding. If $C$ has two 1-homologous edges, then $f(G\setminus V_{0})$ can be isotoped to a separating affine embedding. In both cases we have a contradiction.\\
\end{pro}

\subsubsection{Case 2}

Assume $f(G)$ has the embedding as in Case 2, then $f(G+\bar{K_{2}})$ has no link between two 1-homologous cycles. It suffices to look at links between two 0-homologous cycles and between one 0-homologous and one 1-homologous cycle.\\

\begin{theo}
Assume $f(G)$ has the embedding as in Case 2. Let the set of 1-homologous edges be $(v_{0}, v_{1})$, $(v_{1}, v_{2})$, and $(v_{2}, v_{0})$. The embedding $f(G+\bar{K_{2}})$ has no nonsplit link of two 0-homologous cycles if and only if $f(G \setminus (v_{0}, v_{1}))$, $f(G \setminus (v_{1}, v_{2}))$ and $f(G \setminus (v_{2}, v_{0}))$ are equivalent to nonseparating planar embeddings.
\end{theo}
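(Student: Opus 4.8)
The plan is to reduce everything to Theorem~\ref{Theorem1} and then mimic the structure of the preceding Case~1 theorem. By Theorem~\ref{Theorem1}, $f(G+\bar{K_{2}})$ has no nonsplit link of two $0$-homologous cycles if and only if $f(G)$ is a nonseparating projective planar embedding, so it suffices to show that $f(G)$ is nonseparating if and only if each of $f(G\setminus(v_{0},v_{1}))$, $f(G\setminus(v_{1},v_{2}))$, $f(G\setminus(v_{2},v_{0}))$ is equivalent to a nonseparating planar embedding. First I would record that each deletion can indeed be isotoped to a planar embedding: deleting one edge of the triangle leaves exactly two $1$-homologous edges, and these share a common vertex, so the construction used in the Case~1 planarization applies verbatim. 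Write $T$ for the triangle $\{(v_{0},v_{1}),(v_{1},v_{2}),(v_{2},v_{0})\}$ and let $F_{0}$ be the face of the affine embedding $f(G\setminus T)$ that contains the boundary of projective space; the three triangle vertices lie on $\partial F_{0}$ and $\operatorname{int}F_{0}$ contains no vertex of $G$.

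For the easy direction, suppose $f(G)$ is separating, so it carries a $0$-homologous separating cycle $C$. Since the only $1$-homologous edges are those of $T$ and a $0$-homologous cycle meets $T$ in an even number of edges, $C$ uses either $0$ or $2$ edges of $T$. If $C$ uses no edge of $T$ it lies in $G\setminus T$ and is therefore a separating cycle of each of the three planar deletions. If $C$ uses two edges of $T$, say $(v_{0},v_{1})$ and $(v_{1},v_{2})$, then $C$ survives in $G\setminus(v_{2},v_{0})$, where after planarization it becomes a separating cycle using both surviving $1$-homologous edges. Either way some deletion is separating, which (with Theorem~\ref{Theorem1}) establishes the implication that all three deletions being nonseparating planar forces no nonsplit link.

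For the converse I would argue the contrapositive: a separating cycle in some planar deletion produces a $0$-homologous separating cycle of $f(G)$. Fix a separating cycle $C$ of $H:=f(G\setminus(v_{0},v_{1}))$ and note that, by the planarization, $G\setminus T$ is embedded identically in $H$ and in $f(G)$, the two embeddings differing only in how the triangle edges are drawn inside $F_{0}$. The cycle $C$ uses $0$, $1$, or $2$ of the surviving edges $(v_{1},v_{2}),(v_{2},v_{0})$. If it uses $0$ or $2$ of them, $C$ is already $0$-homologous in $f(G)$ and separates the same pair of vertices, so $f(G)$ is separating. The remaining case, $C$ using exactly one surviving edge, say $(v_{1},v_{2})$, is the crux: here $C$ is essential in $f(G)$ and does not separate, so I would reroute it through $v_{0}$, replacing $(v_{1},v_{2})$ by $(v_{0},v_{1})\cup(v_{0},v_{2})$ to obtain the $0$-homologous cycle $C''=C+T$ of $f(G)$, and then show that $C''$ separates the same two vertices that $C$ separates in $H$.

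The main obstacle is precisely this last rerouting step. The two cycles agree on the subpath $P\subseteq G\setminus T$ and differ only inside $\overline{F_{0}}$; since $\operatorname{int}F_{0}$ contains no vertex and the reroute merely pushes the arc from $v_{1}$ to $v_{2}$ across the vertex-free region of $F_{0}$, picking up the two crossings of the projective boundary that convert the essential $C$ into the null $C''$, I expect that no vertex changes sides. The delicate point I anticipate having to argue carefully is that sweeping this arc across $\partial F_{0}$, and in particular across the boundary of the ball defining projective space, cannot interchange the disk side and the M\"obius side in a way that relocates a vertex; confining the entire modification to the vertex-free interior of the single face $F_{0}$ is what should make this work, and making that confinement rigorous is the technical heart of the proof.
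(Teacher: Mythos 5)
Your proposal follows the same route as the paper's proof: reduce everything to Theorem~\ref{Theorem1}, then move separating cycles back and forth between $f(G)$ and the planarized deletions by bookkeeping the parity of $1$-homologous edges used. Your ``easy direction'' is essentially the paper's second paragraph. The difference is in the other direction: the paper disposes of it in a single unjustified sentence (``if a deletion has a separating cycle, then $f(G)$ has a separating cycle''), while you correctly expose the case that sentence hides, namely a separating cycle $C$ of the planarized deletion using exactly one surviving $1$-homologous edge, which corresponds to an essential, hence nonseparating, cycle of $f(G)$.

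However, the reroute $C\mapsto C+T$ cannot close this case, because the implication you (and the paper) are trying to prove is false as stated. Take $G$ to be the triangle $v_0v_1v_2$ together with a path $v_1 - x - v_2$ and a pendant edge $(x,u)$, with the Case~2 embedding $f(G)$ whose $1$-homologous edges are the three triangle edges. The cycle space of $G$ has dimension $2$, and the only $0$-homologous cycle of $f(G)$ is $(v_1,x)\cup(x,v_2)\cup(v_2,v_0)\cup(v_0,v_1)$, off which only $u$ lies; so $f(G)$ is nonseparating, and by Theorem~\ref{Theorem1} there is no nonsplit link of two $0$-homologous cycles. Now planarize $f(G\setminus(v_0,v_1))$ by drawing the arc replacing $(v_1,v_2)$ \emph{below} $v_0$ (and the arc replacing $(v_2,v_0)$ inside the region so created); this is a legitimate redrawing in the sense of the paper's Case~1 construction, and in it the cycle $C$ formed by the new arc and the path $v_1-x-v_2$ encloses $v_0$ and excludes $u$, so this planar deletion \emph{is} separating. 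Your reroute visibly fails here: $C+T$ passes through $v_0$, so the very vertex that $C$ separated now lies on the rerouted cycle and no separation survives (indeed none can, since $f(G)$ has no separating cycle at all); the same degeneracy occurs whenever $v_0\in P$, in which case $C+T$ is not even a cycle. The root cause is that ``equivalent to a nonseparating planar embedding'' is not well-defined: the redrawing of the formerly $1$-homologous edges inside the face $F_0$ is not unique, and different choices (above or below $v_0$, wrapping or not wrapping around parts of the graph) have different separating behavior. The theorem is only tenable if read as ``\emph{some} redrawing is nonseparating,'' and then the converse direction must construct a good planarization from a nonseparating $f(G)$, rather than lift an arbitrary separating cycle of an arbitrary planarization. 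Neither your argument nor the paper's one-sentence assertion does this, so the step you flagged as the technical heart is a genuine gap, and it is not repairable by the confinement argument you sketch.
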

\begin{pro}
 Suppose $f(G \setminus (v_{0}, v_{1}))$, $f(G \setminus (v_{1}, v_{2}))$ and $f(G \setminus (v_{2}, v_{0}))$ has a separating cycle, then $f(G)$ has a separating cycle. By Theorem \ref{Theorem1}, $f(G+\bar{K_{2}})$ has a nonsplit link of two 0-homologous cycles.\\

Suppose $f(G+\bar{K_{2}})$ has a nonsplit link of two 0-homologous cycles, then $f(G)$ is a separating projective planar embedding and has a separating cycle $C$. Note that $C$ either has no 1-homologous edge or has two 1-homologous edges. If $C$ has no 1-homologous edge, then $f(G \setminus \{(v_{0}, v_{1}), (v_{1}, v_{2}), (v_{2}, v_{0})\})$ is a separating planar embedding. If $C$ has two 1-homologous edges, then one of $f(G \setminus (v_{0}, v_{1}))$, $f(G \setminus (v_{1}, v_{2}))$ and $f(G \setminus (v_{2}, v_{0}))$ is a separating planar embedding. In both cases we have a contradiction.\\
\end{pro}
\begin{note}
Dehkordi and Farr \cite{Dehkordi} showed that every nonseparating planar graph is an outerplanar graph, a wheel graph, or an elongated triangular prism graph. We can then separately look at cases where $f(G \setminus V_{0})$, $f(G \setminus V_{1})$, $f(G \setminus (v_{0}, v_{1}))$, $f(G \setminus (v_{1}, v_{2}))$ and $f(G \setminus (v_{2}, v_{0}))$ are outerplanar, wheel, or elongated triangular prism. 
\end{note}





\subsection{IPL graphs on 16 edges}

There is one minor-minimal IPL graph with 15 edges, which is $K_{4,4}-e$ \cite{2008REU}. We examined the other Petersen family graphs with one added edge or one vertex splitting to search for a minor-minimal IPL graph on 16 edges. The details of this examination follow. We found no new minor-minimal IPL graphs on 16 edges. We declined to examine possible minor-minimal IPL graphs on 17 or more edges.\\

Since $K_{6}$ is not IPL and $K_7-2e$ is minor minimal IPL, there is no other IPL graphs with seven or fewer vertices. However, classifying all minor-minimal IPL graphs on 8 (or more) vertices remains open.\\

Here we show every graph that results from a Petersen family graph with either one added edge or one vertex splitting. This makes the graphs have 16 edges. Every graph here is either not a minor-minimal IPL graph, or contains $K_{4,4}-e$ as a minor, and this shows there is no minor-minimal IPL graph on 16 edges.

\begin{figure}[H]
\centering
\includegraphics[scale=0.4]{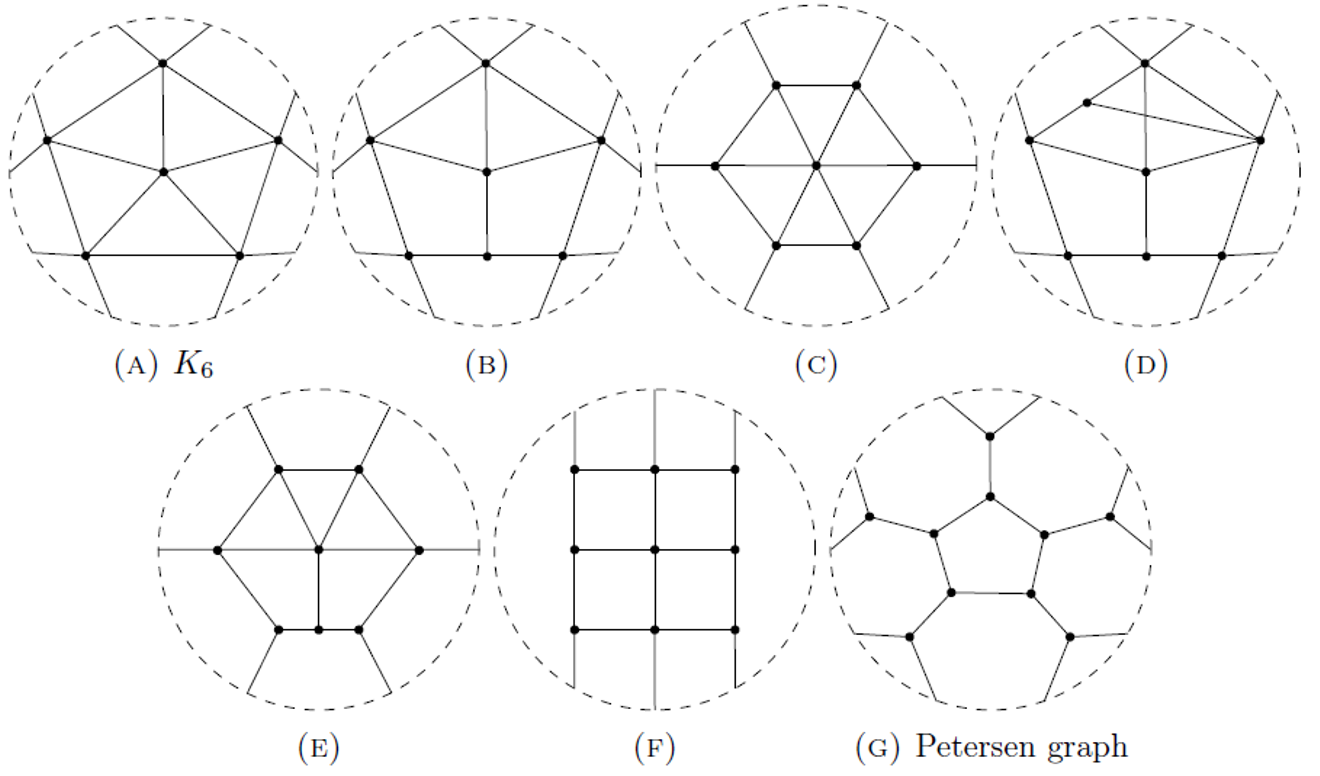}
\caption{The Petersen family graphs on the projective plane. Image from \cite{Dehkordi}, which was inspired by K. Kawarabayashi et al \cite{Kawarabayashi}.}
\end{figure}

\subsubsection{Adding an edge}

We will add one edge to each of the Petersen family graphs and determine if the resulting graph is minor-minimally IPL.

First, consider graph \textbf{$K_{6}$(A)}.
No edge can be added to this graph because it is a complete graph.

Now, consider graph \textbf{$P_{7}$(B) and $K_{3,3,1}$(C)}.
The graph $K_7 -2e$ is also minor-minimally IPL \cite{2008REU}. This graph has seven vertices and 19 edges. This means that there is no IPL graph with seven vertices and less than 19 edges. Thus, we do not need to examine the graphs $P_7$ and $K_{3,3,1}$ for IPL graphs on 16 edges, as both have seven vertices.

Now, consider graph \textbf{$P_{8}$(E)}.
The graph $P_8$ contains three vertices of degree 3, four vertices of degree 4, and one vertex of degree 5. One of the vertices of degree 3, vertex 6, is not equivalent to the others under any graph automorphism as in Figure \ref{6connected}. This vertex is already connected to vertices 5, 7, and 8. If we connect vertex 6 to the remaining vertices, the graph remains projective planar. The remaining two vertices of degree 3 are equivalent, so without loss of generality, we have chosen vertex 7. This vertex is already connected to vertices 1, 3, and 6. If we connect vertex 6 to the remaining vertices, the graph remains projective planar, as seen in Figure \ref{7connected}.

\begin{figure}[H]
\centering 
\begin{minipage}[b]{0.4\linewidth}
\centering
\includegraphics[scale=0.66]{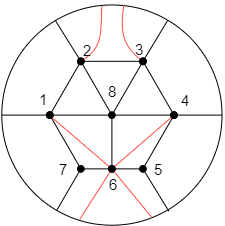}
\caption{Vertex 6 connected}
\label{6connected}
\end{minipage}
\quad
\begin{minipage}[b]{0.4\linewidth}
\centering
\includegraphics[scale=0.33]{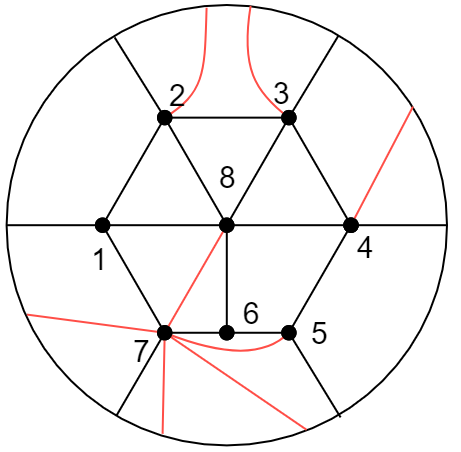} 
\caption{Vertex 7 connected}
\label{7connected} 
\end{minipage}
\end{figure}

The vertices of degree 4 are equivalent, so without loss of generality, we have chosen vertex 1. It is already connected to vertices 2, 4, 7, and 8. If we connect vertex 1 to the remaining vertices, the graph remains projective planar, as seen in Figure \ref{1connected}. There is one vertex of degree 5, which is vertex 8. It is already connected to 1, 2, 3, 4, and 6. If we connect it to 5 and 7, the graph remains projective planar, as seen in Figure \ref{8connected}. Thus, adding one edge to $P_{8}$ will not create a minor-minimally IPL graph.

\begin{figure}[H]
\centering
\begin{minipage}[b]{0.4\linewidth}
\centering
\includegraphics[scale=0.35]{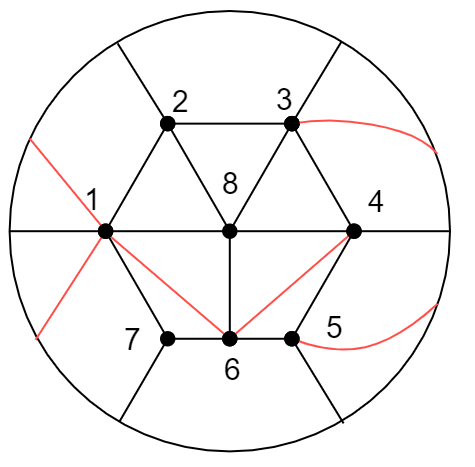}
\caption{Vertex 1 connected}
\label{1connected}
\end{minipage}
\quad
\begin{minipage}[b]{0.4\linewidth}
\centering
\includegraphics[scale=0.35]{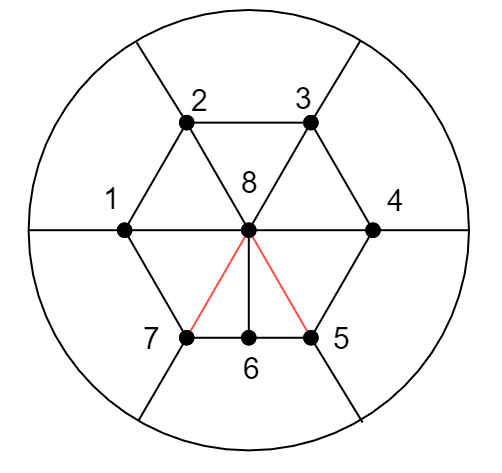}
\caption{Vertex 8 connected}
\label{8connected}
\end{minipage}
\end{figure}

Now, consider graph \textbf{$P_{9}$(F)}.
In $P_9$, there are six vertices of degree 3 and three vertices of degree 4. The vertices of degree 3 are all equivalent, so without loss of generality, we have chosen vertex 1. This vertex is already connected to vertices 2, 4, and 9. If we add an edge from vertex 1 to any of the remaining vertices, the graph will remain projective planar, as seen in Figure \ref{v1connected}. Now consider the vertices of degree 4. They are all equivalent, so without loss of generality, we have chosen vertex 4. This vertex is already connected to vertices 1, 5, 6, and 7. As seen in the drawing, if we add an edge from vertex 4 to any of the remaining vertices, the graph will remain projective planar, as seen in Figure \ref{v4connected}. Thus, adding one edge to $P_{9}$ will not create a minor-minimally IPL graph.
\begin{figure}[H]
\centering
\begin{minipage}[b]{0.3\linewidth}
\centering
\includegraphics[scale=0.35]{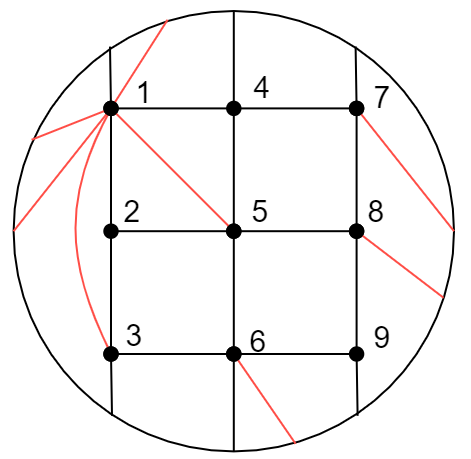}
\caption{Vertex 1 connected}
\label{v1connected}
\end{minipage}
\quad
\begin{minipage}[b]{0.3\linewidth}
\centering
\includegraphics[scale=0.35]{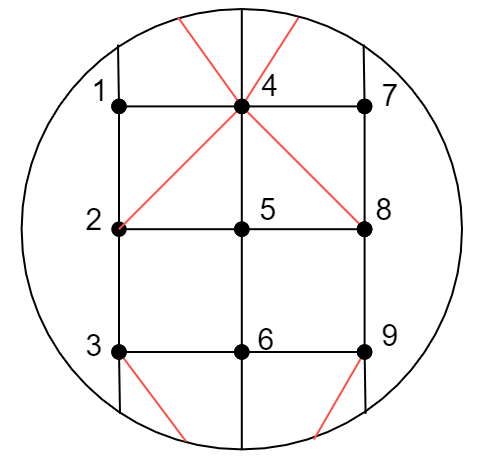}
\caption{Vertex 4 connected}
\label{v4connected}
\end{minipage}
\quad
\begin{minipage}[b]{0.3\linewidth}
\centering
\includegraphics[scale=0.35]{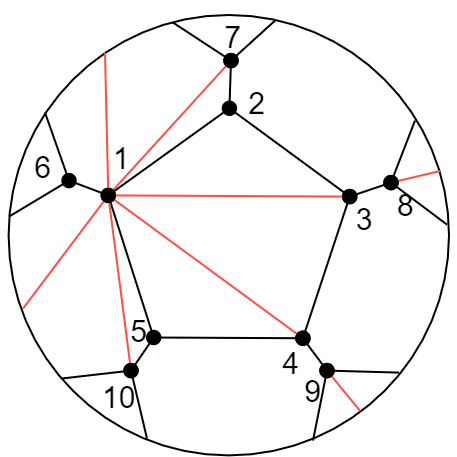}
\caption{Vertex 1 connected}
\label{p10v1connected}
\end{minipage}
\end{figure}

Finally, consider graph \textbf{$P_{10}$(G)}.
Every vertex on this graph has degree 3, and they are all equivalent. Without loss of generality, we have chosen vertex 1. This vertex is already connected to vertices 2, 5, and 6. If we add an edge from vertex 1 to any of the remaining vertices, the graph will remain projective planar, as seen in Figure \ref{p10v1connected}. Thus, adding one edge to $P_{10}$ will not create a minor-minimally IPL graph.

\subsubsection{Splitting a vertex}
Vertex splitting is the opposite of edge contraction. When splitting a vertex, the edges from that vertex are divided into two groups - the edges that will remain connected to the old vertex and the edges that will be moved and now connect to the new vertex. Splitting a vertex of degree 3 means that one of these groups will have only one edge in it. This results in a graph that is homeomorphic to the original graph. The original graphs are all projective planar, and the homeomorphic graphs will also have this property. Thus, for the vertices of degree 3 we know the graphs that result from their vertex splitting will not be IPL.

First, consider graph \textbf{$K_{6}$(A)}.
Every vertex splitting of $K_6$ will create a graph on seven vertices. The only minor-minimal IPL graph on seven vertices are $K_7 - 2e$, which have 19 edges.

Now, consider graph \textbf{$P_{7}$(B)}.
This graph has one vertex of degree 3, three vertices of degree 4, and three vertices of degree 5. We consider the vertices of degree 4. They are all equivalent. Without loss of generality, we have chosen vertex 7. Assign numbers to its adjacent edges: edge 1 connects vertex 7 to vertex 1, edge 2 connects vertex 7 to vertex 2, edge 3 connects vertex 7 to vertex 3, and edge 4 connects vertex 7 to vertex 5, as in Figure \ref{p7labelled}.

\begin{figure}[H]
\centering
\includegraphics[scale=0.35]{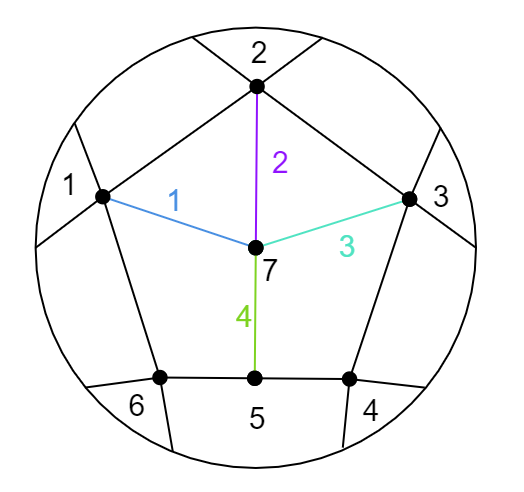}
\caption{Graph $P_7$ with the edges incident to vertex 7 labelled.}
\label{p7labelled}

\end{figure}

Cases 1: Suppose three of the four edges connect to the new vertex. This will create a homeomorphic graph, so this case will create a projective planar graph.

Case 2: Suppose two edges that bound the same region connect to the new vertex. This case will create a projective planar graph, as seen in Figure \ref{p7case2}.

Case 3: Suppose two edges that do not bound the same regions, one of which connects to a degree 3 vertex, are connected to the new vertex. This first embedding is not projective planar, as seen in Figure \ref{Case 3 Embedding 1_1}. However, there is another embedding that is projective planar, as seen in Figure \ref{Case 3 Embedding 2_1}, so this case does not create an IPL graph.
\begin{figure}[H]
\centering
\begin{minipage}[b]{0.3\linewidth}
\centering
\includegraphics[scale=0.35]{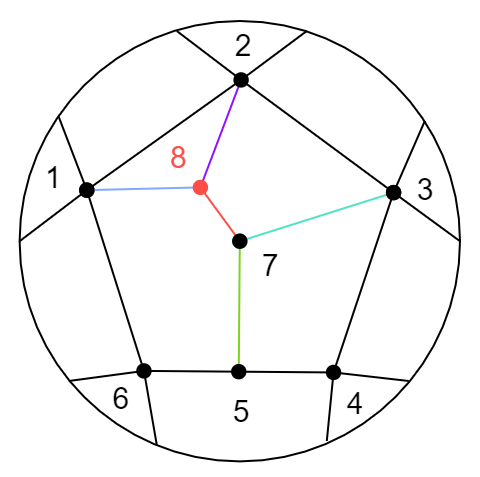}
\caption{Case 2 embedding}
\label{p7case2}
\end{minipage}
\quad
\begin{minipage}[b]{0.32\linewidth}
\centering
\includegraphics[scale=0.35]{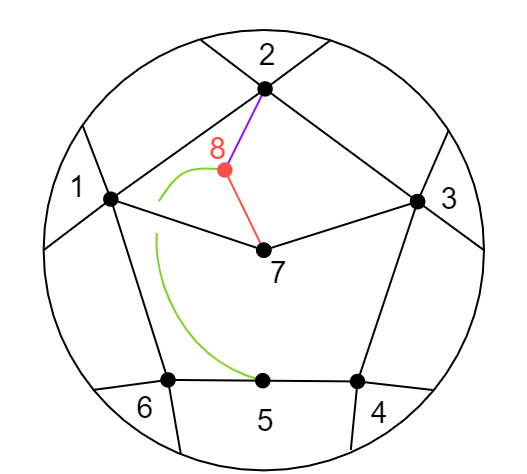}
\caption{Case 3 Embedding 1}
\label{Case 3 Embedding 1_1}
\end{minipage}
\quad
\begin{minipage}[b]{0.32\linewidth}
\centering
\includegraphics[scale=0.35]{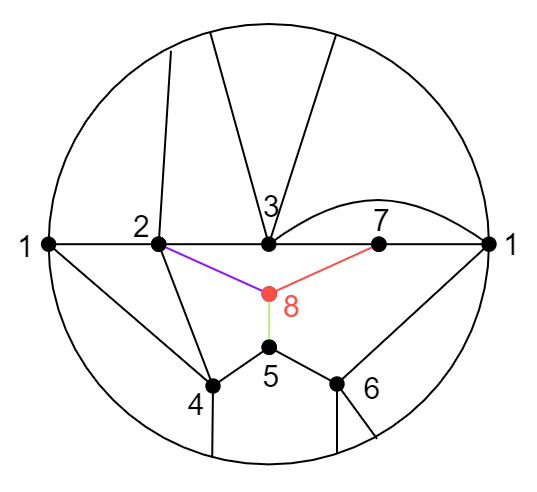}
\caption{Case 3 Embedding 2}
\label{Case 3 Embedding 2_1}
\end{minipage}
\end{figure}

Case 4: Suppose two edges that do not bound the same regions, both of which connect to a degree 5 vertex, are connected to the new vertex. This case is equivalent to Case 3, except the new vertex and vertex 7 are switched - the graphs are isomorphic. Therefore, this graph will also be projective planar.

From these four cases, we can conclude that no splitting of a degree 4 vertex will create a minor-minimal IPL graph with 16 edges. Now we must consider the vertices of degree 5. They are all equivalent, so without loss of generality, we have chosen vertex 2. Assign numbers to its adjacent edges: Edge 1 connects vertex 2 to vertex 1, Edge 2 connects vertex 2 to vertex 4, Edge 3 connects vertex 2 to vertex 6, Edge 4 connects vertex 2 to vertex 3, and Edge 5 connects vertex 2 to vertex 7.

\begin{figure}[H]
\centering
\includegraphics[scale=0.35]{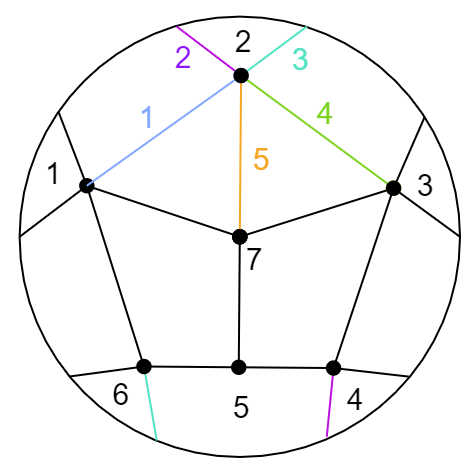}
\caption{Graph $P_7$ with the edges incident to vertex 2 labelled.}

\end{figure}

Case 1: Suppose four of the five edges connect to the new vertex. This will create a homeomorphic graph, so this case will create a projective planar graph.

Case 2: Suppose two edges that bound the same region connect to the new vertex. This case will create a projective planar graph, as seen in Figure \ref{Case 2 Embedding}.

Case 3: Suppose two edges that bound the same region remain connected to the old vertex, and the remaining three edges connect to the new vertex. This case is equivalent to Case 2, except the new vertex and vertex 2 are switched - the graphs are isomorphic. This means that this case will also create a projective planar graph.

Case 4: Suppose the two edges that connect to the degree 5 vertices are connect to the new vertex. This means edges 1 and 4 connect to the new vertex. This graph is nonprojective planar, as seen in Figure \ref{case4}, but it is not minor-minimally IPL, as this graph has $K_{4,4}-e$ as a subgraph. The minor $K_{4,4}-e$ is formed by deleting the edge that connects 1 and 3.
\begin{figure}[H]
\centering
\begin{minipage}[b]{0.4\linewidth}
\centering
\includegraphics[scale=0.35]{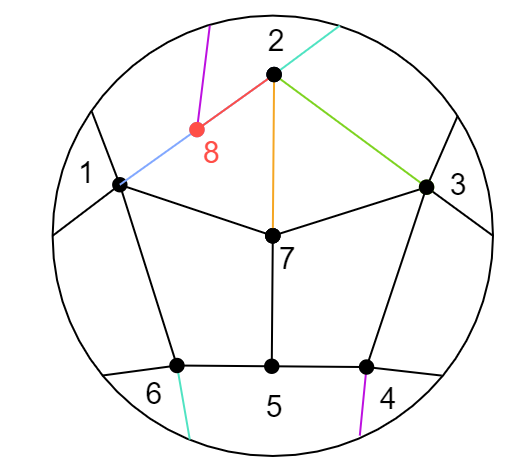}
\caption{Case 2 Embedding}
\label{Case 2 Embedding}
\end{minipage}
\quad
\begin{minipage}[b]{0.4\linewidth}
\centering
\includegraphics[scale=0.35]{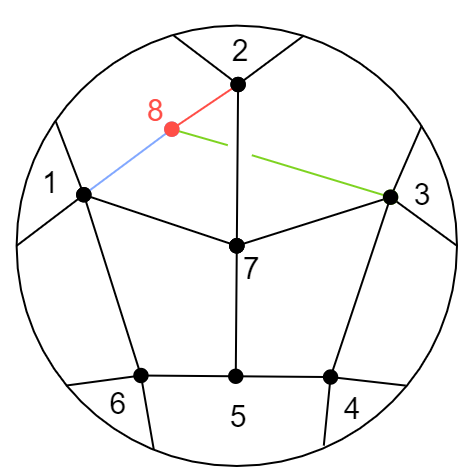}
\caption{Case 4 Embedding}
\label{case4}
\end{minipage}
\end{figure}

The graph in Figure \ref{case4} is equivalent to Graph (D) from the image in the introduction. The graphs are isomorphic, as they are the same graph when the vertices are labelled like this:

\begin{figure}[H]
\centering
\includegraphics[scale=0.35]{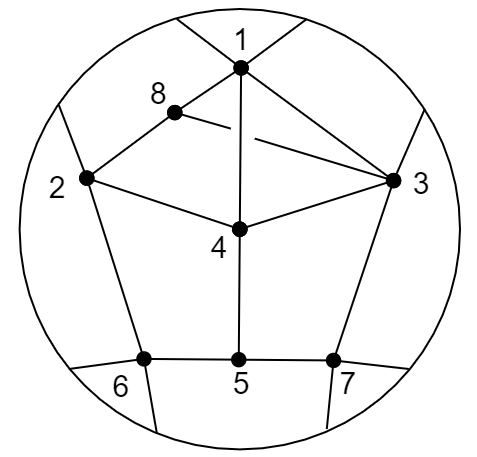}
    \caption{Equivalent embedding to Graph (D)}
\end{figure}

Case 5: Suppose the three edges that connect to degree 4 vertices are connected to the new vertex. This case is equivalent to Case 4, except the new vertex and vertex 2 are switched - the graphs are isomorphic. This means this graph is also IPL, but not minor-minimally.

Case 6: Suppose two edges that do not bound the same region, one of which connects to a degree 4 vertex and the other connects to a degree 5 vertex, are connected to the new vertex. Without loss of generality, we have chosen edges 1 and 3. This first embedding is not projective planar, as seen in Figure \ref{case6_1}. However, there is another embedding that is projective planar, as seen in Figure \ref{case6_2}, so this case does not create an IPL graph.
\begin{figure}[H]
\centering
\begin{minipage}[b]{0.45\linewidth}
\centering
\includegraphics[scale=0.35]{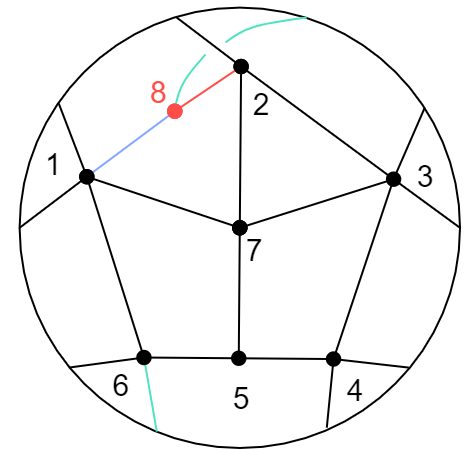}
\caption{Case 6 Embedding 1}
\label{case6_1}
\end{minipage}
\quad
\begin{minipage}[b]{0.45\linewidth}
\centering
\includegraphics[scale=0.35]{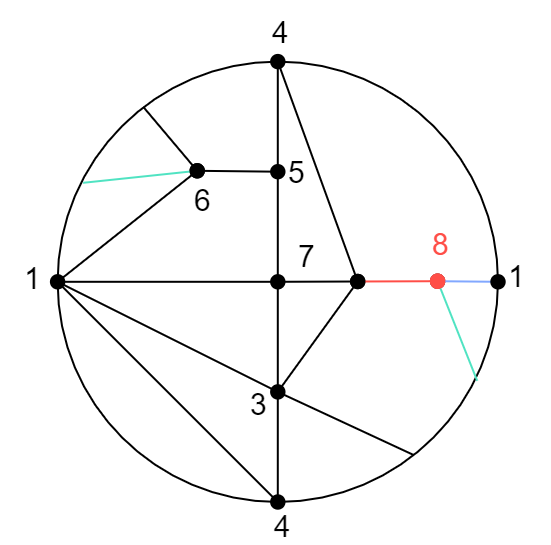}
\caption{Case 6 Embedding 2}
\label{case6_2}
\end{minipage}
\end{figure}

Case 7: Suppose two edges that do not bound the same region, one of which connects to a degree 4 vertex and the other connects to a degree 5 vertex, remain connected to vertex 2 and the remaining three edges are connected to the new vertex. This case is equivalent to Case 6, except the new vertex and vertex 2 are switched - the graphs are isomorphic. This means that this case will also create a projective planar graph.

Case 8: Suppose two edges that connect to the degree 5 vertices and do not bound the same region are connected to the new vertex. Without loss of generality, we have chosen edges 2 and 5 to connect to the new vertex. This first embedding is not projective planar, as seen in Figure \ref{case81}. However, there is another embedding that is projective planar, as seen in Figure \ref{case82}, so this case does not create an IPL graph.
\begin{figure}[H]
\centering
\begin{minipage}[b]{0.45\linewidth}
\centering
\includegraphics[scale=0.35]{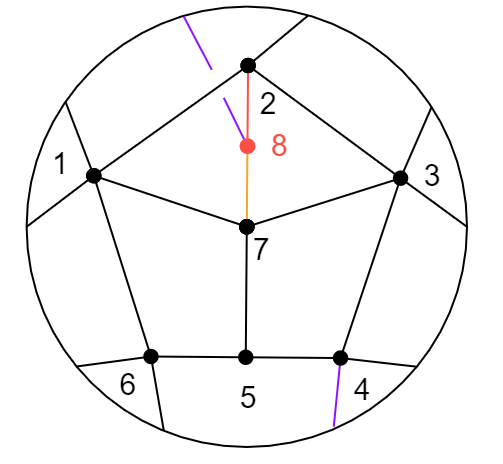}
\caption{Case 8 Embedding 1}
\label{case81}
\end{minipage}
\quad
\begin{minipage}[b]{0.45\linewidth}
\centering
\includegraphics[scale=0.35]{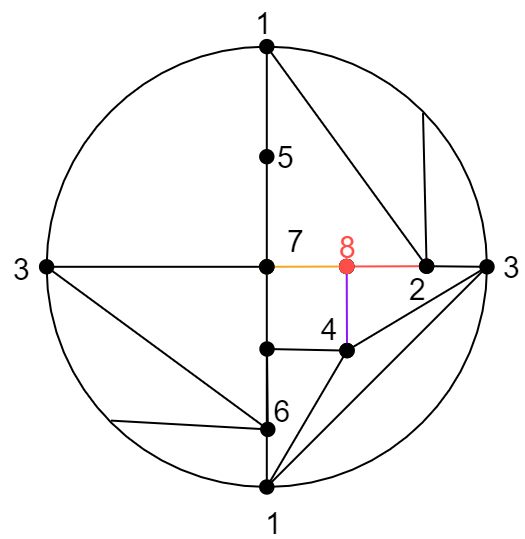}
\caption{Case 8 Embedding 2}
\label{case82}
\end{minipage}
\end{figure}

From these cases, we can conclude that no splitting of a degree 5 vertex will create a minor-minimal IPL graph with 16 edges. Thus, we can conclude none of graphs that contain $P_7$ as a minor with 16 edges that result from vertex splittings are minor-minimally IPL.

Now, consider graph \textbf{$K_{3,3,1}$(C)}.
$K_{3,3,1}$ has six vertices of degree 4 and one vertex of degree 6. The six vertices of degree 4 are all equivalent, so without loss of generality, we have chosen vertex 1. Assign numbers to its adjacent edges: edge 1 connects vertex 1 to vertex 2, edge 2 connects vertex 1 to vertex 7, edge 3 connects vertex 1 to vertex 6, and edge 4 connects vertex 1 to vertex 4.
\begin{figure}[H]
\centering
\includegraphics[scale=0.38]{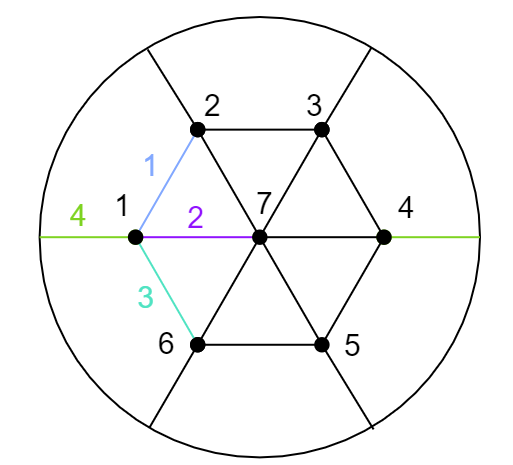}
\caption{Graph $K_{3,3,1}$ with the edges incident to vertex 1 labelled.}
\end{figure}

Case 1: Suppose three of the four edges connect to the new vertex. This will create a homeomorphic graph, so this case will create a projective planar graph.

Case 2: Suppose two edges that bound the same region connect to the new vertex. This case will create a projective planar graph, as seen in Figure \ref{k331case2}.

Case 3: Suppose two edges that do not bound the same regions, one of which connects to a degree 6 vertex, are connected to the new vertex. This first embedding is not projective planar, as seen in Figure \ref{case3emb1}. However, there is another embedding that is projective planar, as seen in Figure \ref{case3emb2}, so this case does not create an IPL graph.
\begin{figure}[H]
\centering
\begin{minipage}[b]{0.3\linewidth}
\centering
\includegraphics[scale=0.35]{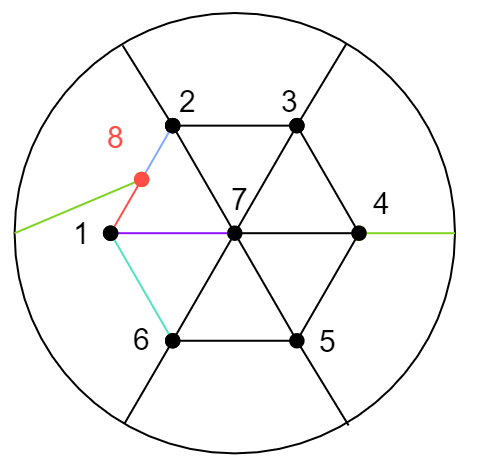}
\caption{Case 2 Embedding}
\label{k331case2}
\end{minipage}
\quad
\begin{minipage}[b]{0.32\linewidth}
\centering
\includegraphics[scale=0.35]{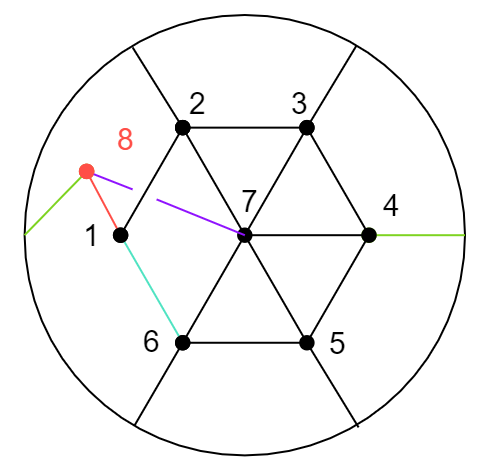}
\caption{Case 3 Embedding 1}
\label{case3emb1}
\end{minipage}
\quad
\begin{minipage}[b]{0.32\linewidth}
\centering
\includegraphics[scale=0.6]{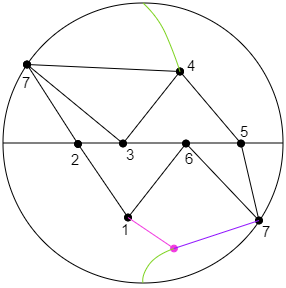}
\caption{Case 3 Embedding 2}
\label{case3emb2}
\end{minipage}
\end{figure}

Case 4: Suppose two edges that do not bound the same regions, both of which connect to degree 4 vertices, are connected to the new vertex. This case is equivalent to Case 3, except the new vertex and vertex 1 are switched - the graphs are isomorphic. Therefore, this graph will also create a projective planar graph.

We can conclude that no splitting of a degree 4 vertex of $K_{3,3,1}$ will create a minor-minimal IPL graph with 16 edges.

Next, we will examine the degree 6 vertex, vertex 7. Assign numbers to its adjacent edges: edge 1 connects vertex 7 to vertex 1, edge 2 connects vertex 7 to vertex 2, edge 3 connects vertex 7 to vertex 3, edge 4 connects vertex 7 to vertex 4, edge 5 connects vertex 7 to vertex 5, and edge 6 connects vertex 7 to vertex 6.
\begin{figure}[H]
\centering
\includegraphics[scale=0.62]{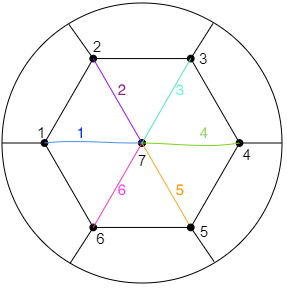}
\caption{Graph $K_{3,3,1}$ with the edges incident to vertex 7 labelled.}
\end{figure}

Case 1: Suppose five of the six edges connect to the new vertex. This will create a homeomorphic graph, so this case will create a projective planar graph.

Case 2: Suppose two edges that bound the same region connect to the new vertex. This case will create a projective planar graph, as seen in Figure \ref{case2embedd}.

Case 3: Suppose two edges that bound the same region remain connected to the old vertex, and the remaining four edges connect to the new vertex. This case is equivalent to Case 2, except the new vertex and vertex 7 are switched - the graphs are isomorphic. This means that this case will also create a projective planar graph.

Case 4: Suppose three edges that bound adjacent regions connect to the new vertex. This case will create a projective planar graph, as seen in Figure \ref{case4embedd}.

\begin{figure}[H]
\centering
\begin{minipage}[b]{0.3\linewidth}
\centering
\includegraphics[scale=0.62]{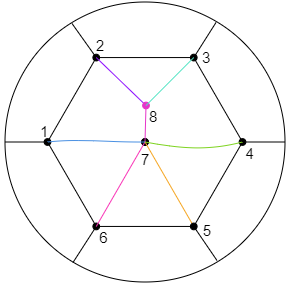}
\caption{Case 2 Embedding}
\label{case2embedd}
\end{minipage}
\quad
\begin{minipage}[b]{0.32\linewidth}
\centering
\includegraphics[scale=0.62]{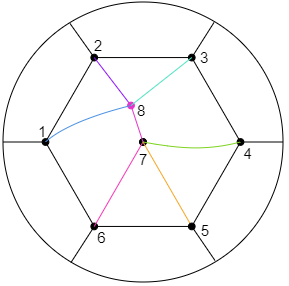}
\caption{Case 4 Embedding}
\label{case4embedd}
\end{minipage}
\quad
\begin{minipage}[b]{0.32\linewidth}
\centering
\includegraphics[scale=0.62]{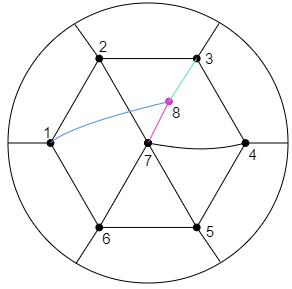}
\caption{Case 5 Embedding}
\label{Case5embed}
\end{minipage}
\end{figure}

Case 5: Suppose two edges that are not adjacent, separated by one edge between their regions, connect to the new vertex. Without loss of generality, chose edges 1 and 3. This embedding is not projective planar, as seen in Figure \ref{Case5embed}, but it is not minor-minimally IPL, because it has $K_{4,4}-e$ as a minor. Delete the edge that connects vertices 5 and 7 to get $K_{4,4}-e$.

Case 6: Suppose two edges that are not adjacent, separated by one edge between their regions, remain connected to vertex 7 and the remaining edges connect to the new vertex. Without loss of generality, chose edges 2, 4, 5, and 6 to connect to the new vertex. This case is equivalent to Case 5, except the new vertex and vertex 7 are switched - the graphs are isomorphic. This means that the graph is IPL, but not minor-minimally.

Case 7: Suppose two edges that are not adjacent, separated by two edges between their regions, connect to the new vertex. Without loss of generality, we have chosen edges 1 and 4. This first embedding is not projective planar, as seen in Figure \ref{Case7em1}. However, there is another embedding that is projective planar, as seen in Figure \ref{Case7em2}, so this case does not create an IPL graph.
\begin{figure}[H]
\centering
\begin{minipage}[b]{0.45\linewidth}
\centering
\includegraphics[scale=0.62]{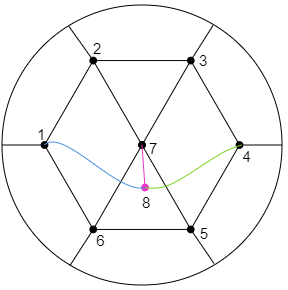}
\caption{Case 7 Embedding 1}
\label{Case7em1}
\end{minipage}
\quad
\begin{minipage}[b]{0.45\linewidth}
\centering
\includegraphics[scale=0.62]{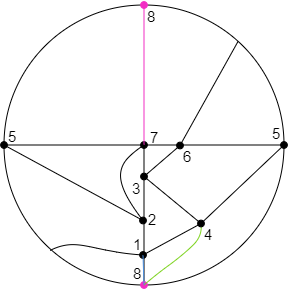}
\caption{Case 7 Embedding 2}
\label{Case7em2}
\end{minipage}
\end{figure}

Case 8: Suppose two edges that are not adjacent, separated by two edges between their regions, remain connected to vertex 7 and the remaining edges connect to the new vertex. Without loss of generality, we have chosen edges 2, 3, 5, and 6 to connect to the new vertex. This case is equivalent to Case 7, except the new vertex and vertex 7 are switched - the graphs are isomorphic. Therefore, this case will also create a projective planar graph.

Case 9: Suppose two edges that bound the same region and a third edge that is the opposite of either of those two edges connect to the new vertex. Without loss of generality, we have chosen edges 1, 2, and 4 to connect to the new vertex. This first embedding is not projective planar, as seen in Figure \ref{Case91}. However, there is another embedding that is projective planar, as seen in Figure \ref{Case92}, so this case does not create an IPL graph.
\begin{figure}[H]
\centering
\begin{minipage}[b]{0.45\linewidth}
\centering
\includegraphics[scale=0.62]{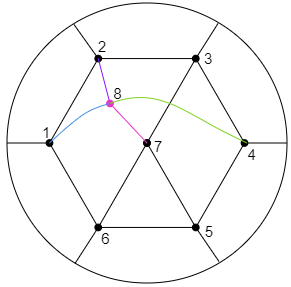}
\caption{Case 9 Embedding 1}
\label{Case91}
\end{minipage}
\quad
\begin{minipage}[b]{0.45\linewidth}
\centering
\includegraphics[scale=0.62]{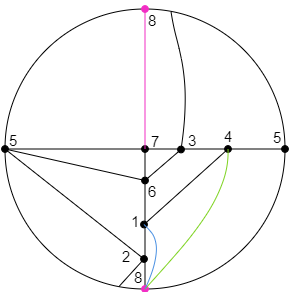}
\caption{Case 9 Embedding 2}
\label{Case92}
\end{minipage}
\end{figure}

Case 10: Suppose three edges, where no two edges bound the same region, connect to the new vertex. Without loss of generality, we have chosen edges 1, 3, and 5 connect to the new vertex. This embedding is not projective planar, as seen in Figure \ref{Case10}. It is the graph $K_{4,4}$. Thus, it is not a new minor-minimally IPL, as $K_{4,4}-e$ is a minor.
\begin{figure}[H]
\centering
\includegraphics[scale=0.62]{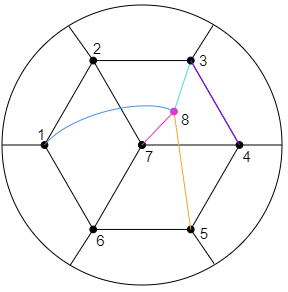}
\caption{Case 10, which is $K_{4,4}$}
\label{Case10}
\end{figure}

We can conclude that no splitting of the degree 7 vertex of $K_{3,3,1}$ will create a minor-minimal IPL graph with 16 edges. Thus, we can conclude none of the graphs containing $K_{3,3,1}$ as a minor on 16 edges that result from vertex splittings are minor-minimally IPL.

Now, consider graph \textbf{$P_{8}$(E)}. This graph has three vertices of degree 3, four vertices of degree 4, and one vertex of degree 5. First, we consider the degree 4 vertices. They are all equivalent, so without loss of generality, we have chosen vertex 3. Assign numbers to its adjacent edges: edge 1 connects vertex 3 to vertex 7, edge 2 connects vertex 3 to vertex 4, edge 3 connects vertex 3 to vertex 8, and edge 4 connects vertex 3 to vertex 2.
\begin{figure}[H]
\centering
\includegraphics[scale=0.62]{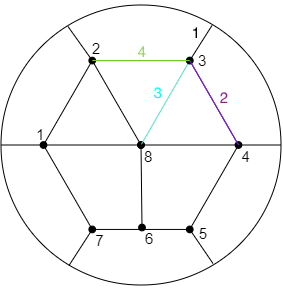}
\caption{Graph $P_8$ with the edges incident to vertex 3 labelled.}
\end{figure}

Case 1: Suppose three of the four edges connect to the new vertex. This will create a homeomorphic graph, so this case will create a projective planar graph.

Case 2: Suppose two edges that bound the same region connect to the new vertex. This case will create a projective planar graph, as seen in Figure \ref{p8case2}.

Case 3: Suppose opposite vertices, each from a degree 4 vertex, connect to the new vertex. For this example, edges 2 and 4 connect to the new vertex. This first embedding is not projective planar, as seen in Figure \ref{Case3p8_emb1}. However, there is another embedding that is projective planar, as seen in Figure \ref{Case3p8_emb2}, so this case does not create an IPL graph.
\begin{figure}[H]
\centering
\begin{minipage}[b]{0.3\linewidth}
\centering
\includegraphics[scale=0.62]{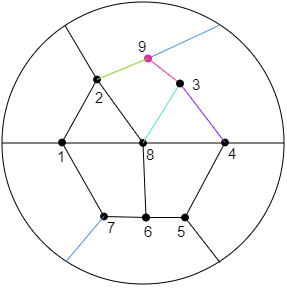}
\caption{Case 2 Embedding}
\label{p8case2}
\end{minipage}
\quad
\begin{minipage}[b]{0.32\linewidth}
\centering
\includegraphics[scale=0.62]{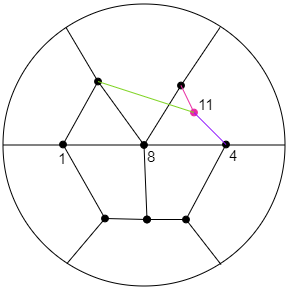}
\caption{Case 3 Embedding 1}
\label{Case3p8_emb1}
\end{minipage}
\quad
\begin{minipage}[b]{0.32\linewidth}
\centering
\includegraphics[scale=0.62]{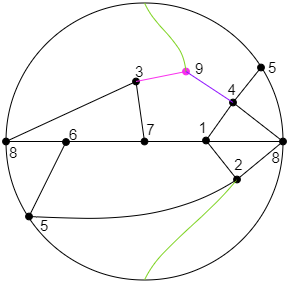}
\caption{Case 3 Embedding 2}
\label{Case3p8_emb2}
\end{minipage}
\end{figure}

Case 4: Suppose opposite vertices, one from a degree 3 vertex and one from a degree 5 vertex, connect to the new vertex. For this example, this would mean edges 1 and 3 connect to the new vertex. This case is equivalent to Case 3, except the new vertex and vertex 3 are switched - the graphs are isomorphic. Therefore, this graph will also be projective planar.

We can conclude that no splitting of the degree 4 vertices of $P_{8}$ will create a minor-minimal IPL graph with 16 edges. Next we will look at the vertex of degree 5, vertex 8. Assign numbers to its adjacent edges: Edge 1 connects vertex 8 to vertex 4, Edge 2 connects vertex 8 to vertex 6, Edge 3 connects vertex 8 to vertex 1, Edge 4 connects vertex 8 to vertex 2, and Edge 5 connects vertex 8 to vertex 3.
\begin{figure}[H]
\centering
\includegraphics[scale=0.62]{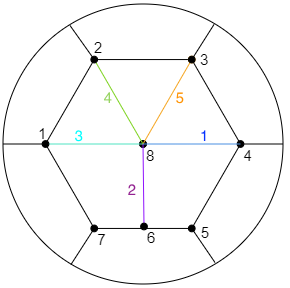}
\caption{Graph $P_8$ with the edges incident to vertex 8 labelled.}
\end{figure}

Case 1: Suppose four of the five edges connect to the new vertex. This will create a homeomorphic graph, so this case will create a projective planar graph.

Case 2: Suppose two edges that bound the same region connect to the new vertex. This case will create a projective planar graph, as seen in Figure \ref{p8case22}.

Case 3: Suppose two edges that bound the same region remain connected to the old vertex, and the remaining three edges connect to the new vertex. This case is equivalent to Case 2, except the new vertex and vertex 8 are switched - the graphs are isomorphic. This means that this case will also create a projective planar graph.

Case 4: Suppose the two opposite edges that connect to degree 4 vertices connect to the new vertex. This means edges 1 and 3 connect to the new vertex. This first embedding is not projective planar, as seen in Figure \ref{case4p8emb1}. However, there is another embedding that is projective planar, as seen in Figure \ref{case4p8emb2}, so this case does not create an IPL graph.
\begin{figure}[H]
\centering
\begin{minipage}[b]{0.3\linewidth}
\centering
\includegraphics[scale=0.62]{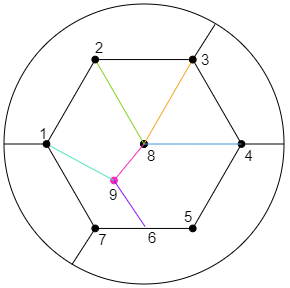}
\caption{Case 2 Embedding}
\label{p8case22}
\end{minipage}
\quad
\begin{minipage}[b]{0.32\linewidth}
\centering
\includegraphics[scale=0.62]{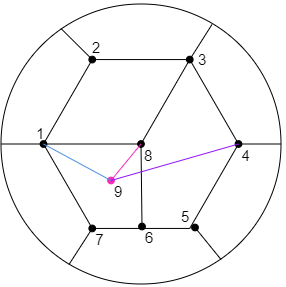}
\caption{Case 4 Embedding 1}
\label{case4p8emb1}
\end{minipage}
\quad
\begin{minipage}[b]{0.32\linewidth}
\centering
\includegraphics[scale=0.62]{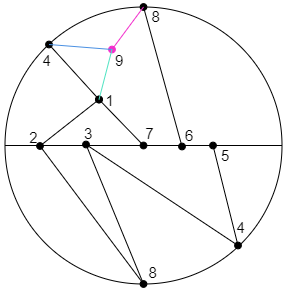}
\caption{Case 4 Embedding 2}
\label{case4p8emb2}
\end{minipage}
\end{figure}

Case 5: Suppose the two opposite edges that connect to degree 4 vertices remain connected to vertex 8, and the remaining edges connect to the new vertex. This means edges 2, 4, and 5 connect to the new vertex. This case is equivalent to Case 4, except the new vertex and vertex 8 are switched - the graphs are isomorphic. This means this case also creates a projective planar graph.

Case 6: Suppose two edges that are not opposites and that connect to degree 4 vertices connect to the new vertex. Without loss of generality, we have chosen edges 1 and 4. This first embedding is not projective planar, as seen in Figure \ref{case6p8emb1}. However, there is another embedding that is projective planar, as seen in Figure \ref{case6p8emb2}, so this case does not create an IPL graph.
\begin{figure}[H]
\centering
\begin{minipage}[b]{0.45\linewidth}
\centering
\includegraphics[scale=0.62]{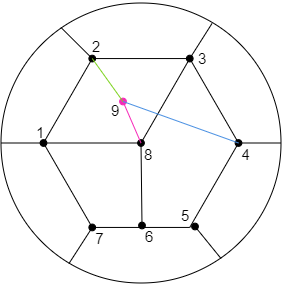}
\caption{Case 6 Embedding 1}
\label{case6p8emb1}
\end{minipage}
\quad
\begin{minipage}[b]{0.45\linewidth}
\centering
\includegraphics[scale=0.62]{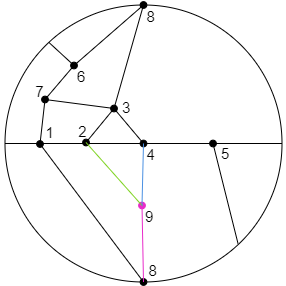}
\caption{Case 6 Embedding 2}
\label{case6p8emb2}
\end{minipage}
\end{figure}

Case 7: Suppose two edges that are not opposites and that connect to degree 4 vertices remain connected to vertex 8 and the remaining 3 edges are connected to the new vertex. This case is equivalent to Case 6, except the new vertex and vertex 8 are switched - the graphs are isomorphic. This means that this case will also create a projective planar graph.

Case 8: Suppose two edges that do not bound the same region, one from a degree 3 vertex and one from a degree 4 vertex, are connected to the new vertex. Without loss of generality, we have chosen edges 2 and 4 to connect to the new vertex. This first embedding is not projective planar, as seen in Figure \ref{case8emb1}. However, there is another embedding that is projective planar, as seen in Figure \ref{case8emb2}, so this case does not create an IPL graph.
\begin{figure}[H]
\centering
\begin{minipage}[b]{0.45\linewidth}
\centering
\includegraphics[scale=0.62]{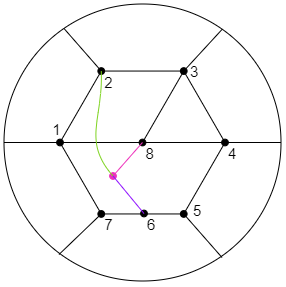}
\caption{Case 8 Embedding 1}
\label{case8emb1}
\end{minipage}
\quad
\begin{minipage}[b]{0.45\linewidth}
\centering
\includegraphics[scale=0.62]{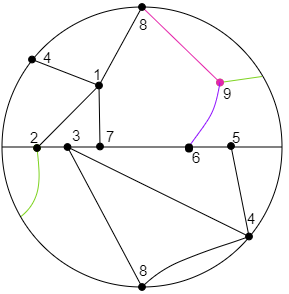}
\caption{Case 8 Embedding 2}
\label{case8emb2}
\end{minipage}
\end{figure}

From these cases, we can conclude that no splitting of the degree 5 vertex will create a minor-minimal IPL graph with 16 edges. Thus, we can conclude none of the graphs containing $P_8$ as a minor on 16 edges that result from vertex splittings are minor-minimally IPL.

Now, consider graph \textbf{$P_{9}$(F)}.
This graph has six vertices of degree 3 and three vertices of degree 4. We only have to consider the degree 4 vertices. They are all equivalent, so without loss of generality, we have chosen vertex 5. Assign numbers to its adjacent edges: edge 1 connects vertex 5 to vertex 4, edge 2 connects vertex 5 to vertex 8, edge 3 connects vertex 5 to vertex 6, and edge 4 connects vertex 5 to vertex 2.
\begin{figure}[H]
\centering
\includegraphics[scale=0.62]{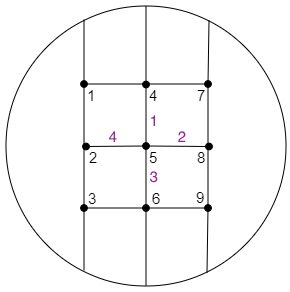}
\caption{Graph $P_9$ with the edges incident to vertex 5 labelled.}
\end{figure}

Case 1: Suppose three of the four edges connect to the new vertex. This will create a homeomorphic graph, so this case will create a projective planar graph.

Case 2: Suppose two edges that bound the same region connect to the new vertex. This case will create a projective planar graph, as seen in Figure \ref{p9case2}.

Case 3: Suppose opposite vertices, each from a degree 4 vertex, connect to the new vertex. For this example, this would mean edges 1 and 3 connect to the new vertex. This first embedding is not projective planar, as seen in Figure \ref{case3p9emb1}. However, there is another embedding that is projective planar, as seen in Figure \ref{case3p9emb2}, so this case does not create an IPL graph.
\begin{figure}[H]
\centering
\begin{minipage}[b]{0.3\linewidth}
\centering
\includegraphics[scale=0.62]{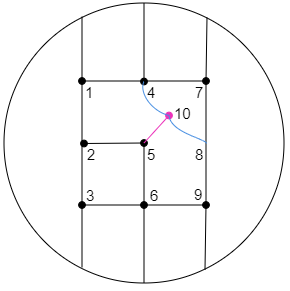}
\caption{Case 2 Embedding}
\label{p9case2}
\end{minipage}
\quad
\begin{minipage}[b]{0.32\linewidth}
\centering
\includegraphics[scale=0.63]{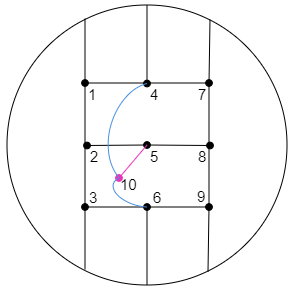}
\caption{Case 3 Embedding 1}
\label{case3p9emb1}
\end{minipage}
\quad
\begin{minipage}[b]{0.32\linewidth}
\centering
\includegraphics[scale=0.63]{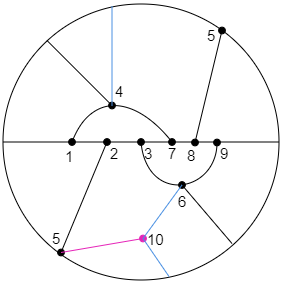}
\caption{Case 3 Embedding 2}
\label{case3p9emb2}
\end{minipage}
\end{figure}

Case 4: Suppose opposite vertices, each from a degree 3 vertex, connect to the new vertex. For this example, this would mean edges 2 and 4 connect to the new vertex. This case is equivalent to Case 3, except the new vertex and vertex 5 are switched - the graphs are isomorphic. Therefore, this graph will also be projective planar.

We can conclude that no splitting of the degree 4 vertices of $P_{9}$ will create a minor-minimal IPL graph with 16 edges. Thus, we can conclude none of the graphs containing $P_{9}$ as a minor on 16 edges that result from vertex splittings are minor-minimally IPL.

Finally, consider graph \textbf{$P_{10}$(G)}.
For $P_{10}$, every vertex has degree 3. Every vertex splitting will be projective planar because it will be homeomorphic to the original graph. Thus, we can conclude none of the graphs containing $P_{10}$ as a minor on 16 edges that result from vertex splittings are minor-minimally IPL.

\subsection{35 minor-minimal nonprojective planar graphs}


Only one of the 35 minor-minimal nonprojective-planar graphs is IPL, and it is $K_{4.4} -e$. A direction for future research would be to examine those graphs plus $K_1$. We conjecture that the resulting graphs would all be IPL.

\section{Acknowledgements}
This material is based upon work obtained by the research group at the 2021 Reserach Experience for Undergraduates Program at SUNY Potsdam and Clarkson University, advised by Joel Foisy and supported by the National Science Foundation under Grant No. H98230-21-1-0336; St. Catharine's College, University of Cambridge; and Universidad Autónoma del Estado de Hidalgo. We would like to thank Jeffrey Dinitz for Archdeacon’s list of minor-minimal nonouter-projective-planar graphs.

\end{document}